\theoremstyle{plain}
\newtheorem{thm}{\textbf{Theorem}}
\newtheorem{lem}{\textbf{Lemma}}
\newtheorem{prop}{\textbf{Proposition}}
\theoremstyle{definition}
\newtheorem{defn}{\textbf{Definition}}
\theoremstyle{remark}
\newtheorem{rem}{\textbf{Remark}}
\newcommand{\prF}[2]{\langle #1,#2\rangle_\mathcal{F}}
\newcommand{\pref}{p^\mathrm{g,n}}
\newcommand{\pgen}{p^\mathrm{g}}
\begin{document}

\title{\LARGE \bf
Distributionally Robust Decision Making Leveraging Conditional Distributions
}
\author{Yuxiao Chen, Jip Kim, and James Anderson
\thanks{Yuxiao Chen is with Nvidia Corporation, Santa Clara, CA, 95051, Email: {\tt yuxiaoc@nvidia.com}. Jip Kim and James Anderson are with Columbia University, New York, NY, 10027, Emails: {\tt \{jk4564,ja3451\}@columbia.edu}}
%\thanks{Jip Kim is partially funded by U. S. Department of Energy, Office of Energy Efficiency and Renewable Energy, Solar Energy Technologies Office under DE-EE0008769. James Anderson is partially funded through the NSF award CAREER 2144634.}
}

\maketitle
\thispagestyle{empty}

%\IEEEtitleabstractindextext{
\begin{abstract}
Distributionally robust optimization (DRO) is a powerful tool for decision making under uncertainty. It is particularly appealing because of its ability to leverage existing data. However, many practical problems call for decision-making with some auxiliary information, and DRO in the context of conditional distribution is not straightforward. We propose a conditional kernel distributionally robust optimization (CKDRO) method that enables robust decision making under conditional distributions through kernel DRO and the conditional mean operator in the reproducing kernel Hilbert space (RKHS). In particular, we consider problems where there is a correlation between the unknown variable $y$ and an auxiliary observable variable $x$. Given past data of the two variables and a queried auxiliary variable, CKDRO represents the conditional distribution $\mathbb{P}(y|x)$ as the conditional mean operator in the RKHS space and quantifies the ambiguity set in the RKHS as well, which depends on the size of the dataset as well as the query point. To justify the use of RKHS, we demonstrate that the ambiguity set defined in RKHS can be viewed as a ball under a metric that is similar to the Wasserstein metric. The DRO is then dualized and solved via a finite dimensional convex program. The proposed CKDRO approach is applied to a generation scheduling problem and shows that the result of CKDRO is superior to common benchmarks in terms of quality and robustness.
\end{abstract}

\section{Introduction}\label{sec:intro}

Distributionally robust optimization (DRO) has attracted great attention recently as a framework for decision making under uncertainty. Under a classic stochastic decision making setup, the goal is to minimize the expected cost given a distribution of the uncertainty. In practice, this might be overly restrictive  as the uncertainty distribution is often not fully known. This concern motivates DRO, which seeks to minimize the worst case expected cost (maximize reward) under a set of distributions referred to as the ambiguity set. If the ambiguity set is large enough to contain the true distribution, the resulting decision provides an upper bound on achievable real-world performance. The simplest (and most conservative) ambiguity set contains any distribution on the support of the uncertainty; in which case  DRO reduces to a robust optimization \cite{ben2009robust}. Several popular constructions of the ambiguity set are based on metrics that measure the difference between probability distributions, such as the Wasserstein metric \cite{esfahani2018data,yang2020wasserstein}, $\phi$-divergence \cite{bayraksan2015data}, and moments \cite{delage2010distributionally}. See the review paper \cite{rahimian2019distributionally} for a comprehensive summary. DRO is also closely related to risk-averse optimization and planning \cite{ruszczynski2006optimization} as all coherent risk measures have a dual form that is a DRO problem over the corresponding ambiguity set.

Depending on the probability space, a DRO problem can be discrete (the random variable has discrete support), or continuous (the support is continuous). Computationally, the continuous problem is much more complex, and many methods rely on sampling to gain computation tractability \cite{esfahani2018data,yang2020wasserstein,coppens2021data}. Most existing data-driven methods assume i.i.d. sampling from the unknown distribution and establish the ambiguity set based on the empirical distribution of the samples. An i.i.d. assumption then provides a probabilistic guarantee that the true distribution will be contained inside an ambiguity set with high probability via concentration inequalities \cite{fournier2015rate,tropp2015introduction}.

Unfortunately, the i.i.d. condition does not hold for decision making problems with conditional distributions. To motivate this formulation, consider the following two examples:
\begin{itemize}
    % \item Power generation plan \yc{need some reference here}: the power generation needs to be planned before the load is observed. We know that there exist strong correlation between load and signals such as weather and time, and some history data can be leveraged to make an informed guess of the load profile.
    % \item Generation scheduling in power systems operation is scheduled with electricity load and renewable generation forecasts that contain inevitable uncertainty \cite{dall2016optimal,zhang2011chance}. Exploiting the existing strong correlation between the load and signals such as weather and time, and historical data, the power grid operator can make the more reliable and cost-efficient scheduling. \ja{[Jip[can you improve this bullet point]}
    \item Generation scheduling \cite{dall2016optimal,zhang2011chance}: to provide electricity at a minimal cost, generation scheduling needs to be robustified against uncertainties in power demand, which is dominated by ambient factors such as weather conditions. A power system operator can exploit historical data and take account for the correlation between the electricity demand and weather forecast to make more reliable and cost-efficient generation scheduling.
    \item Autonomous driving \cite{chen2018modelling,chen2020counter,chen2020reactive}: to plan safe motions for the autonomous vehicle, we need to reason about the surrounding human drivers' actions, which strongly depend on the scenario. Given past data of how drivers react to different scenarios, for a particular scenario, the motion planning of the autonomous vehicle needs to leverage the distribution of possible actions of surrounding human drivers conditioned on the scenario.
\end{itemize}
Such problems can be formulated as DRO under ambiguity sets defined on some conditional distributions, yet it is unclear how to quantify the ambiguity set about the conditional distribution and how to leverage past data as the i.i.d. condition does not hold. In such problems, both the conditional mean and uncertainty characterization is critical, motivating methods like quantile regression \cite{meinshausen2006quantile}. A two step process can be applied where the first step uses existing regression tools to predict the distribution of the uncertainty given the auxiliary variables, followed by a subsequent stochastic optimization step given the prediction \cite{bertsimas2020predictive}. The difficulties of DRO with conditional distributions are: (i) The conditional distribution is hard to characterize from data, especially in continuous domains, (ii) It is difficult to determine the size of the ambiguity set for conditional distributions.

The proposed CKDRO method is inspired by two existing tools. First, the kernel embedding method in a reproducing kernel Hilbert space (RKHS) provides a convenient characterization of the conditional distribution with conditional mean operators, and its empirical version can be computed directly from data samples. For reference, see the  papers by Song \textit{et al.} \cite{song2009hilbert}, Fukumizu \textit{et al.} \cite{fukumizu2004dimensionality}, and the review paper \cite{muandet2016kernel}. On a different front, DRO with an RKHS was recently studied in \cite{zhu2020kernel} where the authors propose to represent ambiguity sets with mean operator norms defined on an RKHS. Based on these ideas, we propose conditional kernel distributionally robust optimization (CKDRO) with the following contributions: (i) We extend kernel DRO to problems with conditional distributions for data-driven robust conditional decision making, (ii) We establish connections between ambiguity set in RKHS and the Wasserstein metric providing a better understanding the kernel DRO setup, (iii) We propose criteria to determine the ambiguity set for conditional DRO, (iv) We apply the CKDRO method on an optimal power flow (OPF) problem and showcase its benefit over benchmarks.

 \Cref{sec:prelim} reviews some key concepts and tools, \cref{sec:CKDRO} presents the CKDRO method and establishes the connection to DRO through the Wasserstein metric, and \cref{sec:OPF} presents the application of CKDRO on an OPF problem. %and finally we conclude in \cref{sec:conclusion}

\textit{Nomenclature:}
$\mathbb{R}_+$ denotes the non-negative real line, $\langle \cdot,\cdot \rangle$ denotes the inner product of two vectors ($\langle x,y \rangle=x^\intercal y$) or two functions ($\langle f,g \rangle=\int f(x)g(x)dx $). For a random variable $X$, we use the calligraphic $\mathcal{X}$ denote its domain, $\mathbb{E}_Q[f(X)]$ denotes the expectation of a function $f:\mathcal{X}\to\mathbb{R}^n$ under the probability distribution $Q$. When $X$ has a fixed distribution, e.g., it's sampled from a given distribution, we simply write $\mathbb{E}_X$ for the expectation. $\mathcal{M}(\mathcal{X})$ denotes the space of all probability distributions $Q$ supported on $\mathcal{X}$ with $\mathbb{E}_Q[||X||]<\infty$. All vector norms are $\ell_2$-norms unless otherwise specified.

\section{Preliminaries}\label{sec:prelim}
%We start by reviewing some preliminaries.

\subsection{Hilbert space embedding}
Consider a random variable $X\in\mathcal{X}$. $X$ is endowed with some $\sigma$-algebra $\mathcal{A}$, and $\mathcal{P}$ denotes the space of probability distributions over $X$. A \textit{reproducing kernel Hilbert space} (RKHS) $\mathcal{F}$ on $\mathcal{X}$ with a symmetric kernel $k:\mathcal{X}\times\mathcal{X}\to\mathbb{R}$ is a Hilbert space of functions $f:\mathcal{X}\to\mathbb{R}$ whose inner product $\prF{\cdot}{\cdot}$ satisfies the reproducing property:
%\begin{subequations*}%\label{eq:reprod}
  \begin{align*}
    \prF{f(\cdot)}{k(x,\cdot)} &= f(x), \\
    \prF{k(x,\cdot)}{k(x',\cdot)} &= k(x,x').
  \end{align*}
%\end{subequations*}
The feature map $k(x,\cdot)$ is also denoted as $\varphi(x)$. A kernel $k$ is \textit{positive definite} if $\forall x_1,...x_N\in\mathcal{X}, \forall c_1,...c_N\in\mathbb{R}$, $\sum_{i,j=1}^N c_ic_jk(x_i,x_j)\ge 0$, or in  matrix form, $K\succeq 0$, where $K_{ij}=k(x_i,x_j)$. Common choice of kernels include the polynomial kernel $k(x,x')=(x^\intercal x')^d$, the Gaussian RBF kernel $k(x,x')=\exp(-\lambda||x-x'||^2)$, and the Laplacian kernel $k(x,x')=\exp(-\lambda||x-x'||_1)$. For every positive definite kernel $k$, there exists a unique RKHS with $k$ as its kernel up to isometry, and conversely, for every RKHS, its kernel is unique and positive definite \cite{muandet2016kernel}. A kernel is called \textit{translation invariant} if $\forall x_1,x_2,x'_1,x'_2\in\mathcal{X}$ with $x_1-x_2=x'_1-x'_2$, $k(x_1,x_2)=k(x'_1,x'_2)$. Both the Gaussian kernel and the Laplacian kernel are translation invariant, while the polynomial kernel is not.

The distance between points in an RKHS is also defined via the kernel: $d(x,x')=\sqrt{k(x,x)-2k(x,x')+k(x',x')}$.

So far, an RKHS is not related to any probability distribution. The central concept of kernel embedding is the expectation operator:
\begin{equation*}%\label{eq:mu}
  \mu_Q := \mathbb{E}_Q[\varphi(X)],
\end{equation*}
which is the expectation of the feature map under distribution $Q$. When the context is clear, we also use $\mu_X$ to denote the expectation operator when $X$ follows a fixed distribution. Assuming $\mathbb{E}_X[k(X,X)]<\infty$, $\mu_X$ is itself an element of $\mathcal{F}$. Given the reproducing property, we have $\forall f\in\mathcal{F},~\prF{\mu_X}{f}=\mathbb{E}_X[\prF{\varphi(X)}{f(X)}]=\mathbb{E}_X[f(X)]$. Its empirical estimate given a set of samples is simply
\begin{equation*}
  \hat{\mu}_X:=\frac{1}{N}\sum_{i=1}^{N}\varphi(x_i),
\end{equation*}
where $\{x_i\}_{i=1}^N$ is the sample set, assumed to have been drawn i.i.d. from $P$. The norm on $\mathcal{F}$ is defined as:
\begin{equation*}
  ||f||_\mathcal{F}^2=\langle f,f\rangle_\mathcal{F}.
\end{equation*}
If $f$ has a discrete representation: $f=\sum_i^N \alpha_i \varphi(x_i)$, then by the reproducing property,
\begin{equation*}
  ||f||_\mathcal{F}^2=\sum_{i,j=1}^N \alpha_i\alpha_j k(x_i,x_j)=\alpha^\intercal K \alpha.
\end{equation*}

Now consider a second random variable $Y$ with domain $\mathcal{Y}$, and let $\mathcal{B}$ and $\mathcal{Q}$ denote the $\sigma$-algebra and the space of all probability distributions over $Y$ respectively. An RKHS $\mathcal{G}$ is define on $\mathcal{Y}$, induced by the positive definite kernel $l:\mathcal{Y}\times\mathcal{Y}\to\mathbb{R}$. Let $\phi(y)=l(y,\cdot)$ be its feature map, the (uncentered) covariance operator is defined as
\begin{equation*}%\label{eq:cov}
  C_{YX}:=\mathbb{E}_{YX}[\phi(Y)\otimes\varphi(X)],
\end{equation*}
where $\otimes$ is the tensor product. $C_{YX}$ can be viewed as an operator from $\mathcal{F}$ to $\mathcal{G}$ in the sense that for any $f\in\mathcal{F},\forall g\in\mathcal{G}$, $\langle g, C_{YX} f\rangle_\mathcal{G} = Cov[f(X),g(Y)]$. Given samples $\{(x_i,y_i)\}_{i=1}^N$, the empirical evaluation of the covariance operator is
\begin{equation*}%\label{eq:empi_cov}
  \hat{C}_{YX}=\frac{1}{N}\Upsilon \Phi^\intercal
\end{equation*}
where $\Upsilon=[\varphi(x_1),\varphi(x_2)...,\varphi(x_N)]$ is the feature matrix for $X$, $\Phi=[\phi(y_1),\phi(y_2)...,\phi(y_N)]$ is the feature matrix for $Y$.
\begin{lem}[\cite{fukumizu2004dimensionality}]
  If $\mathbb{E}_{YX}[g(Y)|X=\cdot]\in\mathcal{G}$ for any $g\in\mathcal{G}$, then
  \begin{equation*}
    C_{XX}\mathbb{E}_{YX}[g(Y)|X=\cdot]=C_{YX} g.
  \end{equation*}
\end{lem}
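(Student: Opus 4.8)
The plan is to verify the operator identity by testing both sides against an arbitrary element $f\in\mathcal{F}$, exploiting that an element of a Hilbert space is determined by its inner products against a dense set (here the feature maps $\varphi(x)$ span a dense subspace of $\mathcal{F}$). Write $h:=\mathbb{E}_{YX}[g(Y)\mid X=\cdot]$; since this is a function of the conditioning variable $x$, it is naturally an element of $\mathcal{F}$, and the hypothesis is precisely that $h\in\mathcal{F}$, which is what makes $C_{XX}h$ well defined. Correspondingly, the right-hand operator should be read as the cross-covariance $C_{XY}=\mathbb{E}_{YX}[\varphi(X)\otimes\phi(Y)]$, the adjoint of $C_{YX}$, which maps $\mathcal{G}\to\mathcal{F}$ so that $C_{XY}g\in\mathcal{F}$ matches the type of the left-hand side. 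Thus it suffices to show $\langle f,C_{XX}h\rangle_\mathcal{F}=\langle f,C_{XY}g\rangle_\mathcal{F}$ for every $f\in\mathcal{F}$.

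For the left-hand side, I would use the tensor-product definition together with the action $(a\otimes b)c=\langle b,c\rangle a$ to write $C_{XX}h=\mathbb{E}_X[\langle\varphi(X),h\rangle_\mathcal{F}\,\varphi(X)]$. Here the reproducing property and the hypothesis $h\in\mathcal{F}$ give $\langle\varphi(X),h\rangle_\mathcal{F}=h(X)$, so that $C_{XX}h=\mathbb{E}_X[h(X)\varphi(X)]$ and hence
\begin{equation*}
\langle f,C_{XX}h\rangle_\mathcal{F}=\mathbb{E}_X[h(X)\,\langle f,\varphi(X)\rangle_\mathcal{F}]=\mathbb{E}_X[h(X)f(X)].
\end{equation*}
An identical computation on the right-hand side, now using $\langle\phi(Y),g\rangle_\mathcal{G}=g(Y)$, yields $\langle f,C_{XY}g\rangle_\mathcal{F}=\mathbb{E}_{YX}[g(Y)f(X)]$. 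The two expressions are then reconciled by the tower property of conditional expectation: since $h(X)=\mathbb{E}[g(Y)\mid X]$,
\begin{equation*}
\mathbb{E}_X[h(X)f(X)]=\mathbb{E}_X\big[f(X)\,\mathbb{E}[g(Y)\mid X]\big]=\mathbb{E}_{YX}[f(X)g(Y)].
\end{equation*}
Both inner products therefore equal $\mathbb{E}_{YX}[f(X)g(Y)]$ for every $f\in\mathcal{F}$, which forces $C_{XX}h=C_{XY}g$.

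The conceptual content is light — essentially the law of iterated expectation packaged through the reproducing property — so the only step requiring genuine care is analytic rather than algebraic: justifying that the expectation and the inner product can be interchanged, i.e.\ that $\varphi(X)$ and $\phi(Y)$ are Bochner integrable and that $C_{XX}$, $C_{XY}$ are well-defined bounded operators. This is where an integrability assumption such as $\mathbb{E}_X[k(X,X)]<\infty$ (and its analogue for $l$) enters, guaranteeing that $\mathbb{E}_X[h(X)\varphi(X)]$ and $\mathbb{E}_{YX}[g(Y)\varphi(X)]$ exist as elements of $\mathcal{F}$ and that the manipulations above are valid. I would expect this measurability/integrability bookkeeping to be the main obstacle to a fully rigorous writeup, whereas the hypothesis $h\in\mathcal{F}$ is used only to make $C_{XX}h$ meaningful and to invoke the reproducing property $\langle\varphi(X),h\rangle_\mathcal{F}=h(X)$.
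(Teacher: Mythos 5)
The paper offers no proof of this lemma: it is quoted directly from Fukumizu \emph{et al.}\ \cite{fukumizu2004dimensionality}, so there is no in-paper argument to compare yours against. Judged on its own, your proof is correct, and it is essentially the standard argument behind the cited result (adapted to the uncentered covariance operators this paper defines, whereas Fukumizu \emph{et al.}\ use centered ones, for which the same computation produces covariances rather than plain expectations): test both sides against an arbitrary $f\in\mathcal{F}$, unpack the tensor-product definitions via the reproducing property to get $\langle f, C_{XX}h\rangle_\mathcal{F}=\mathbb{E}_X[f(X)h(X)]$ and $\langle f, C_{XY}g\rangle_\mathcal{F}=\mathbb{E}_{YX}[f(X)g(Y)]$, and close the gap with the tower property. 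Two points in your write-up deserve emphasis. First, your repair of the statement's types is not pedantry but necessary: with the paper's convention $C_{YX}=\mathbb{E}_{YX}[\phi(Y)\otimes\varphi(X)]:\mathcal{F}\to\mathcal{G}$, the expression $C_{YX}g$ for $g\in\mathcal{G}$ does not typecheck and must be read as $C_{XY}g$ with $C_{XY}=C_{YX}^{*}:\mathcal{G}\to\mathcal{F}$; likewise $\mathbb{E}_{YX}[g(Y)|X=\cdot]$, being a function on $\mathcal{X}$, must be hypothesized to lie in $\mathcal{F}$, not $\mathcal{G}$. These are transcription slips in the paper, and your reading is the one consistent with the source and the only one under which the identity is meaningful. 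Second, your analytic caveat is placed correctly: $\mathbb{E}_X[k(X,X)]<\infty$ (which the paper assumes when introducing $\mu_X$) and its analogue for $l$ give Bochner integrability of $\varphi(X)$ and of $\varphi(X)\otimes\phi(Y)$, which is exactly what licenses exchanging the inner product with the expectation. One minor nit: you do not need density of the span of the feature maps — you test against every $f\in\mathcal{F}$, which already determines an element of the Hilbert space uniquely.
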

Consequently, \cite{song2009hilbert} shows that if $C_{XX}$ is invertible, then we can define an operator $\mathcal{U}_{Y|X}: \mathcal{F}\to\mathcal{G}$ where $\forall x\in\mathcal{X},$ $\mu_{Y|x}:=\mathcal{U}_{Y|X}\varphi(x)$, and $\mathcal{U}_{Y|X}$ is defined as
\begin{equation*}%\label{eq:cond_mu}
  \mathcal{U}_{Y|X} = C_{XX}^{-1}C_{YX}.
\end{equation*}
Similar to the unconditioned expectation operator, $\mu_{Y|x}$ can be evaluated empirically with samples.
\begin{lem}[\cite{song2009hilbert}, Theorem 5]
  Let $k_x:=\Upsilon^\intercal\varphi(x)$, then $\hat{\mu}_{Y|x}$ is the empirical conditioned expectation operator, and is estimated as $\hat{\mu}_{Y|x}=\Phi(K+\lambda N I_N)^{-1} k_x$, where $\lambda>0$ is the regularization parameter that prevents overfitting.
\end{lem}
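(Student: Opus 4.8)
The plan is to begin from the population identity $\mu_{Y|x}=\mathcal{U}_{Y|X}\varphi(x)=C_{YX}C_{XX}^{-1}\varphi(x)$, substitute the empirical covariance operators, and then collapse the resulting inverse — which a priori lives on the infinite-dimensional space $\mathcal{F}$ — into a finite $N\times N$ matrix inverse. First I would record the empirical operators in feature-matrix form,
\begin{equation*}
  \hat C_{XX}=\tfrac1N\Upsilon\Upsilon^\intercal,\qquad \hat C_{YX}=\tfrac1N\Phi\Upsilon^\intercal,
\end{equation*}
which follow directly from the empirical mean-operator definition $\tfrac1N\sum_{i}\phi(y_i)\otimes\varphi(x_i)$ once the tensor product $a\otimes b$ is identified with the rank-one map $f\mapsto a\,\langle b,f\rangle_{\mathcal F}$. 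Here it is convenient to read $\Upsilon$ as the synthesis operator $\mathbb{R}^N\to\mathcal{F}$ whose adjoint $\Upsilon^\intercal\colon\mathcal F\to\mathbb R^N$ is the evaluation map $f\mapsto(f(x_i))_i$, so that $\Upsilon^\intercal\Upsilon=K$ and $\Upsilon^\intercal\varphi(x)=k_x$.

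The central observation is that $\hat C_{XX}$ is a finite-rank operator on the (generically infinite-dimensional) space $\mathcal{F}$ and is therefore \emph{not} invertible; this is precisely the role of the regularizer $\lambda>0$. Accordingly I would define the empirical estimate as the regularized plug-in quantity
\begin{equation*}
  \hat\mu_{Y|x}=\hat C_{YX}\,(\hat C_{XX}+\lambda I)^{-1}\varphi(x)
  =\tfrac1N\Phi\Upsilon^\intercal\bigl(\tfrac1N\Upsilon\Upsilon^\intercal+\lambda I\bigr)^{-1}\varphi(x),
\end{equation*}
and then clear the factor $1/N$ inside the inverse, using $(\tfrac1N\Upsilon\Upsilon^\intercal+\lambda I)^{-1}=N(\Upsilon\Upsilon^\intercal+\lambda N I)^{-1}$, to reach $\hat\mu_{Y|x}=\Phi\Upsilon^\intercal(\Upsilon\Upsilon^\intercal+\lambda N I)^{-1}\varphi(x)$.

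The concluding step is the push-through (matrix-inversion) identity
\begin{equation*}
  \Upsilon^\intercal(\Upsilon\Upsilon^\intercal+\lambda N I)^{-1}=(\Upsilon^\intercal\Upsilon+\lambda N I_N)^{-1}\Upsilon^\intercal,
\end{equation*}
which transfers the inverse from the ``primal'' feature space to the ``dual'' Gram space. Substituting $\Upsilon^\intercal\Upsilon=K$ and $\Upsilon^\intercal\varphi(x)=k_x$ then gives exactly $\hat\mu_{Y|x}=\Phi(K+\lambda N I_N)^{-1}k_x$, which is the claimed estimator (and coincides with a vector-valued kernel ridge regression of $\phi(Y)$ on $\varphi(X)$, a useful sanity check).

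The main obstacle is justifying the push-through identity at the operator level rather than for plain matrices. One must verify that both $(\Upsilon\Upsilon^\intercal+\lambda N I)$ on $\mathcal{F}$ and $(\Upsilon^\intercal\Upsilon+\lambda N I_N)$ on $\mathbb{R}^N$ are boundedly invertible for $\lambda>0$ (immediate since each is self-adjoint and bounded below by $\lambda N$), and that the elementary intertwining relation $\Upsilon(\Upsilon^\intercal\Upsilon+\lambda N I_N)=(\Upsilon\Upsilon^\intercal+\lambda N I)\Upsilon$ holds; left- and right-multiplying this relation by the two inverses yields the identity. Everything remaining is routine bookkeeping of the $1/N$ factors and the identifications $\Upsilon^\intercal\Upsilon=K$, $\Upsilon^\intercal\varphi(x)=k_x$.
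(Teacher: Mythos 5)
Your derivation is correct, but there is nothing in the paper to compare it against: the paper does not prove this lemma, it imports it verbatim as Theorem~5 of \cite{song2009hilbert}. What you have written is essentially the canonical argument from that source: substitute the empirical covariance operators $\hat C_{XX}=\tfrac1N\Upsilon\Upsilon^\intercal$, $\hat C_{YX}=\tfrac1N\Phi\Upsilon^\intercal$ into the Tikhonov-regularized plug-in estimator $\hat C_{YX}(\hat C_{XX}+\lambda I)^{-1}\varphi(x)$, absorb the $1/N$, and use the push-through identity to convert the inverse on the infinite-dimensional space $\mathcal{F}$ into the $N\times N$ inverse $(K+\lambda N I_N)^{-1}$. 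Your treatment of the genuinely delicate point is right: $\hat C_{XX}$ is finite-rank and never invertible on $\mathcal{F}$, so the regularizer is what makes the plug-in well defined, and both $\Upsilon\Upsilon^\intercal+\lambda N I$ and $K+\lambda N I_N$ are boundedly invertible because they are self-adjoint and bounded below by $\lambda N$. Two small remarks. First, you silently (and correctly) repaired two slips in the paper's own notation: the paper writes $\mathcal{U}_{Y|X}=C_{XX}^{-1}C_{YX}$, which does not type-check as a map $\mathcal{F}\to\mathcal{G}$ (the composition you use, $C_{YX}C_{XX}^{-1}$, is the right one), and its empirical formula $\hat C_{YX}=\tfrac1N\Upsilon\Phi^\intercal$ is really $\hat C_{XY}$; your $\hat C_{YX}=\tfrac1N\Phi\Upsilon^\intercal$ is the one consistent with $C_{YX}=\mathbb{E}[\phi(Y)\otimes\varphi(X)]$. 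Second, a one-line patch: the intertwining relation you verify, $\Upsilon(\Upsilon^\intercal\Upsilon+\lambda N I_N)=(\Upsilon\Upsilon^\intercal+\lambda N I)\Upsilon$, yields after multiplying by the two inverses the identity $(\Upsilon\Upsilon^\intercal+\lambda N I)^{-1}\Upsilon=\Upsilon(\Upsilon^\intercal\Upsilon+\lambda N I_N)^{-1}$, which is the adjoint of the push-through identity you actually invoke; you need one further step (take adjoints, using self-adjointness of both regularized operators), or more directly verify $\Upsilon^\intercal(\Upsilon\Upsilon^\intercal+\lambda N I)=(\Upsilon^\intercal\Upsilon+\lambda N I_N)\Upsilon^\intercal$ and multiply through. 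This is cosmetic, not a gap.
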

\subsection{Distributionally robust optimization}
Distributionally robust optimization considers the following problem:
\begin{equation}\label{eq:DRO}
  \min_{\eta}\max_{Q\in\mathscr{A}} \mathbb{E}_{Q} [q(\eta,X)],
\end{equation}
where $\eta\in H$ is the decision variable, $X$ is the random variable, $\mathscr{A}\subseteq\mathcal{P}$ is the ambiguity set that contains all the distribution $Q$ over $X$ under consideration. As mentioned in the introduction, the ambiguity set is typically defined with some metric that measures the difference between distributions, such as the Wasserstein metric or  $\phi$-divergence. In particular, the Wasserstein metric is defined on $\mathcal{M}(\mathcal{X})$ as follows.
\begin{defn}[Wasserstein metric]\label{def:wass}
  The Wasserstein metric $d_W:\mathcal{M}(\mathcal{X})\times\mathcal{M}(\mathcal{X})\to\mathbb{R}_+$ is defined as
  %\begin{equation*}
    \begin{align*}
    d_W(Q_1,Q_2)& :=\inf_{\Pi}\int_{\mathcal{X}^2}||x_1-x_2||\Pi(x_1,x_2) dx_1 dx_2\\
    \mathrm{s.t.}~& \int_\mathcal{X} \Pi(x_1,x_2) dx_2= Q_1(x_1),\\
     &\int_\mathcal{X} \Pi(x_1,x_2) dx_1= Q_2(x_2).
    \end{align*}
  %\end{equation*}
\end{defn}
The Wasserstein metric is essentially defined by the optimal transport problem, where $\Pi$ is a joint distribution supported on $\mathcal{X}^2$ with the two marginal distributions being $Q_1$ and $Q_2$.

An important equivalent definition was proposed in \cite{kantorovich1958space}, which is used later in this paper.
\begin{lem}[Kantorovich-Rubinstein \cite{kantorovich1958space}]\label{lem:kanto}
  \begin{equation*}%\label{eq:KR}
    d_W(Q_1,Q_2) = \sup_{L(f)\le 1}\left\{\int_\mathcal{X} f(x)Q_1(dx)-\int_\mathcal{X} f(x)Q_2(dx)\right\},
  \end{equation*}
  where $L(f)$ denotes the Lipschitz constant of the function $f$.
\end{lem}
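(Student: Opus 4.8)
The plan is to obtain the identity as the strong-duality relation for the infinite-dimensional linear program that defines $d_W$ in \cref{def:wass}. First I would regard the optimal transport problem as a linear program in the coupling $\Pi\ge 0$, with the two marginal conditions acting as (equality) constraints tied to $Q_1$ and $Q_2$. Introducing dual multiplier functions $\psi(\cdot)$ and $\phi(\cdot)$ for these two constraints and forming the Lagrangian, the problem separates across $\Pi$: minimizing over $\Pi\ge 0$ forces the pointwise condition $\psi(x_1)+\phi(x_2)\le\|x_1-x_2\|$ (otherwise the inner infimum is $-\infty$), and leaves the dual program
\begin{equation*}
  \sup_{\psi(x_1)+\phi(x_2)\le\|x_1-x_2\|}\left\{\int_\mathcal{X}\psi\,Q_1(dx)+\int_\mathcal{X}\phi\,Q_2(dx)\right\}.
\end{equation*}
Weak duality is then immediate: for any feasible coupling $\Pi$ and any feasible pair $(\psi,\phi)$, integrating the pointwise inequality against $\Pi$ and using the marginal conditions gives $\int\psi\,dQ_1+\int\phi\,dQ_2\le\int\|x_1-x_2\|\,d\Pi$, so the dual value never exceeds the primal value.

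The crux of the argument is strong duality, that is, the absence of a duality gap. I would establish it by viewing the transport problem as a saddle-point over the coupling and the dual potentials and invoking a minimax theorem: assuming $\mathcal{X}$ compact (or reducing to that setting through a tightness and approximation argument when $\mathcal{X}$ is merely Polish), the set of couplings with prescribed marginals is convex and weak-$*$ compact, while the objective is bilinear and weak-$*$ continuous, so Sion's minimax theorem — equivalently the Fenchel--Rockafellar theorem, with continuity of the cost supplying the constraint qualification — permits exchanging $\inf_\Pi$ and $\sup_{\psi,\phi}$. This topological step, guaranteeing both that the exchange is legitimate and that the dual supremum is attained, is the main obstacle; the remainder is purely algebraic.

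Finally I would collapse the two-potential dual to the single-function Lipschitz form via the $c$-transform induced by the metric cost. Given any feasible $\phi$, replacing $\psi$ by $\psi(x)=\inf_{y}\{\|x-y\|-\phi(y)\}$ preserves feasibility and can only increase the objective (since $\psi$ dominates the original multiplier pointwise and $Q_1$ is a probability measure), and such a $\psi$ is $1$-Lipschitz, being an infimum of the $1$-Lipschitz maps $x\mapsto\|x-y\|-\phi(y)$. For a $1$-Lipschitz $\psi$ one checks directly that the best compatible $\phi$ is $\phi=-\psi$: choosing $y=x$ gives $\inf_y\{\|x-y\|-\psi(y)\}\le-\psi(x)$, while the Lipschitz bound $\psi(y)\le\psi(x)+\|x-y\|$ supplies the reverse inequality. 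Substituting $\phi=-\psi$ and writing $f=\psi$ recovers exactly $\sup_{L(f)\le 1}\{\int f\,dQ_1-\int f\,dQ_2\}$, which equals the dual value and hence, by strong duality, equals $d_W(Q_1,Q_2)$.
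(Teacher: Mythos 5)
The paper gives no proof of \cref{lem:kanto} at all: it imports the result as a classical fact with a citation to \cite{kantorovich1958space}, so there is no in-paper argument to compare against. Judged on its own, your reconstruction is the standard textbook proof --- linear-programming duality for the transport problem, strong duality via a minimax/Fenchel--Rockafellar argument, then the $c$-transform collapse to a single $1$-Lipschitz potential --- and all three steps are correct in outline. The weak-duality computation, the feasibility and monotonicity of the $c$-transform update $\psi(x)=\inf_y\{\|x-y\|-\phi(y)\}$, and the identity $\phi=-\psi$ for $1$-Lipschitz $\psi$ are exactly right. A pleasant bonus of your route: the first two steps never use the specific form of the cost, and the $c$-transform step uses only the triangle inequality, so the same argument verbatim proves \cref{prop:wass_RKHS} with the RKHS metric $d$ in place of $\|\cdot\|$ --- a statement the paper also delegates to the literature.

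Two caveats. First, a genuine imprecision in the minimax step: the compact set over which the inner infimum runs cannot be ``the set of couplings with prescribed marginals,'' since on that set the multiplier terms vanish identically and there is nothing left to dualize; what is needed is weak-$*$ compactness of the set of \emph{all} probability measures (or mass-bounded nonnegative measures) on $\mathcal{X}^2$, available when $\mathcal{X}$ is compact, with the marginal constraints enforced only through the multipliers $(\psi,\phi)$. Second, because the paper's $\mathcal{M}(\mathcal{X})$ permits unbounded supports and the cost $\|x_1-x_2\|$ is then unbounded, the reduction from Polish to compact spaces and the integrability of the potentials (resting on the first-moment assumption built into $\mathcal{M}(\mathcal{X})$) constitute the real technical work; your sketch correctly flags this as the main obstacle but defers it to the cited theorems, which is acceptable for reproving a classical result but should be stated as such.
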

Ambiguity sets  can then be equivalently defined as $\mathscr{A}=\{Q~|~d_W(Q_0,Q)<\epsilon\}$, where $Q_0$ is the nominal distribution, which can be given, or estimated from data. Work in \cite{esfahani2018data} presented a convex program that solves the DRO with the above ambiguity set where $Q_0$ is the empirical distribution of the samples $Q_0=\frac{1}{N}\delta(x_i)$, where $\delta(\cdot)$ is the Dirac measure.

Kernel DRO was recently proposed in \cite{zhu2020kernel} as a convenient data-driven DRO framework that leverages the RKHS. The distributions are represented with kernel embeddings in an RKHS, and the ambiguity set is typically chosen as an RKHS ball: $\mathcal{C}:=\{\mu~|~||\mu-\mu_0||_\mathcal{F}\le \epsilon\}$. For clarity, we use $\mathcal{C}$ to denote an ambiguity set defined with respect to the expectation operator, and $\mathscr{A}$ to denote an ambiguity set defined by a probability distribution. They are linked by: $\mathscr{A}=\{P\in\mathcal{P}~|~\int_\mathcal{X}\varphi(x)P(dx)\in\mathcal{C}\}$.

There are two common strategies for solving the DRO problem~\eqref{eq:DRO}; the cutting plane technique \cite{calafiore2007ambiguous} and the dual formulation, similar to the ones used in robust optimization \cite{ben2009robust}. When the inner maximization problem is a  concave problem, the dual formulation offers a  convex program for the whole DRO.

For clarity, we shall leave the solution of the kernel DRO to the next section, and present it together with the conditional kernel DRO.

\section{Conditional kernel DRO}\label{sec:CKDRO}
In this section, we present the formulation of the conditional kernel DRO.
\subsection{Problem formulation}
Suppose we are given a training data set $\{x_i,y_i\}_{i=1}^N$ consisting of $X,Y$ pairs, where $X$ is an auxiliary variable and the distribution of $X$ and $Y$ are correlated. The conditional DRO problem then tries to minimize the worst case expectation of the cost given an realization of $X$. In kernel DRO, the ambiguity set is defined by the expectation operator $\mu_{Y|x}$, and the DRO is defined as
\begin{equation*}\label{eq:cond_DRO}
  \min_{\eta}\max_{\mu\in\mathcal{C}[x]} \langle \mu,q(\eta,\cdot)\rangle,
\end{equation*}
where $\mathcal{C}[x]\subseteq\mathcal{G}$ is an ambiguity set on the conditional expectation operator $\mu_{Y|X}$ at $X=x$, and $q:H\times \mathcal{Y}\to \mathbb{R}$ is the cost function.

\subsection{Construction of the ambiguity set}
We first discuss the construction of the ambiguity set $\mathcal{C}[x]$. Two sources of uncertainty about $\mu_{Y|x}$ are considered. The first  is due to the estimation of $\mu_{Y|x}$ from data, the second  is due to  unseen data.
\begin{lem}[Theorem 6 in \cite{song2009hilbert}]\label{lem:conv}
  Assuming $\varphi(x)$ is in the range of $\mathcal{C}_{XX}$, $||\hat{\mu}_{Y|x}-\mu_{Y|x}||_{\mathcal{G}}$ converges to zero with rate $O_p(\lambda^\frac{1}{2}+(N\lambda)^{-\frac{1}{2}})$. \footnote{$O_p$
 stands for probabilistic convergence rate.}
 \end{lem}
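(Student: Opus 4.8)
The plan is to use the standard bias–variance decomposition for Tikhonov-regularized operator estimators. I would introduce the population-level regularized embedding $\mu_{Y|x}^\lambda$, obtained by replacing $C_{XX}^{-1}$ with the regularized inverse $(C_{XX}+\lambda\Id)^{-1}$ in the definition of $\mu_{Y|x}$, and insert it via the triangle inequality:
\[
\|\hat{\mu}_{Y|x}-\mu_{Y|x}\|_{\mathcal{G}}\le \|\hat{\mu}_{Y|x}-\mu_{Y|x}^\lambda\|_{\mathcal{G}}+\|\mu_{Y|x}^\lambda-\mu_{Y|x}\|_{\mathcal{G}}.
\]
I would then show that the first (estimation) term is $O_p((N\lambda)^{-1/2})$ and the second (regularization bias) term is $O_p(\lambda^{1/2})$, whose sum is the claimed rate.

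For the regularization bias I would invoke the functional calculus of the self-adjoint positive operator $C_{XX}$. Replacing $C_{XX}^{-1}$ by $(C_{XX}+\lambda\Id)^{-1}$ introduces a factor $-\lambda(C_{XX}+\lambda\Id)^{-1}$, so that each spectral component of $\varphi(x)$ at eigenvalue $\sigma$ is damped by the multiplier $\lambda/(\sigma+\lambda)$. The hypothesis that $\varphi(x)$ lies in the range of $C_{XX}$ is precisely a source condition guaranteeing that $C_{XX}^{-1}\varphi(x)$ is well defined in $\mathcal{F}$ and sufficiently regular; combined with the elementary bound $\lambda/(\sigma+\lambda)\le \lambda^{1/2}\sigma^{-1/2}$ and the boundedness of the covariance operator, this yields a deterministic $O(\lambda^{1/2})$ estimate with no randomness involved.

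For the estimation term I would apply the resolvent identity
\[
(\hat{C}_{XX}+\lambda\Id)^{-1}-(C_{XX}+\lambda\Id)^{-1}=-(\hat{C}_{XX}+\lambda\Id)^{-1}(\hat{C}_{XX}-C_{XX})(C_{XX}+\lambda\Id)^{-1},
\]
together with a separate accounting of $\hat{C}_{YX}-C_{YX}$, to express $\hat{\mu}_{Y|x}-\mu_{Y|x}^\lambda$ as a sum of terms linear in the two operator perturbations. The probabilistic input is the concentration of the empirical covariance operators in Hilbert–Schmidt norm, namely $\|\hat{C}_{XX}-C_{XX}\|_{HS}=O_p(N^{-1/2})$ and $\|\hat{C}_{YX}-C_{YX}\|_{HS}=O_p(N^{-1/2})$, which follows from a Hilbert-space Bernstein/Hoeffding inequality as in \cite{tropp2015introduction} once $\mathbb{E}_X[k(X,X)]<\infty$ is assumed.

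The main obstacle is obtaining the sharp $(N\lambda)^{-1/2}$ rate rather than the crude $\lambda^{-1}N^{-1/2}$ produced by bounding $\|(\hat{C}_{XX}+\lambda\Id)^{-1}\|\le\lambda^{-1}$ directly, a step that loses a factor of $\lambda^{1/2}$. To recover the stated rate I would split the regularized inverse symmetrically and absorb one factor of $(C_{XX}+\lambda\Id)^{-1/2}$, of norm $\lambda^{-1/2}$, by exploiting that $C_{YX}$ factors through $C_{XX}^{1/2}$ (again a consequence of the range/source condition); the surviving half-inverse then contributes only $\lambda^{-1/2}$, giving $\lambda^{-1/2}N^{-1/2}=(N\lambda)^{-1/2}$. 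Collecting the bias and estimation bounds then produces $O_p(\lambda^{1/2}+(N\lambda)^{-1/2})$, as established in \cite{song2009hilbert}.
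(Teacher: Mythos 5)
The paper never proves this statement: Lemma~\ref{lem:conv} is imported verbatim, citation and all, from Theorem~6 of \cite{song2009hilbert}, so there is no internal proof to compare you against. Your sketch is a faithful reconstruction of the standard argument by which that result (and its refinements, e.g.\ \cite{grunewalder2012conditional}) is actually established: split off the population-level Tikhonov-regularized embedding, bound the regularization bias by $O(\lambda^{1/2})$ using the source condition $\varphi(x)\in\mathrm{range}(C_{XX})$ together with the spectral bound $\lambda\sigma^{1/2}/(\sigma+\lambda)\le\tfrac{1}{2}\lambda^{1/2}$, and bound the estimation term by $O_p((N\lambda)^{-1/2})$ via operator concentration plus the half-inverse trick that avoids the crude $\lambda^{-1}$ resolvent bound. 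Two caveats on your reasoning. First, the factorization $C_{YX}=C_{YY}^{1/2}V_{YX}C_{XX}^{1/2}$ with $\|V_{YX}\|\le 1$, which you invoke to absorb one factor of $(C_{XX}+\lambda\Id)^{-1/2}$, is \emph{not} a consequence of the range/source condition on $\varphi(x)$; it is Baker's theorem on correlation operators and holds for any pair of covariance operators with finite trace. Your argument is therefore valid, but for a different (and weaker-hypothesis) reason than the one you state. Second, the claim $\|\hat{C}_{XX}-C_{XX}\|_{HS}=O_p(N^{-1/2})$ does not follow from $\mathbb{E}_X[k(X,X)]<\infty$ alone: a Hilbert-space Hoeffding/Bernstein inequality needs a bounded kernel, and a Chebyshev argument needs $\mathbb{E}_X[k(X,X)^2]<\infty$. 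Since this paper (and Proposition~\ref{prop:amb}, which consumes the lemma) works with bounded kernels such as the Gaussian RBF, the gap is harmless here, but the moment hypothesis should be stated explicitly in a self-contained proof.
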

Faster convergence rates are available under additional assumptions about the underlying distribution in \cite{grunewalder2012conditional}, but for simplicity, we stick to the convergence rate provided in Lemma \ref{lem:conv}. If we choose $\lambda\sim N^{-\frac{1}{4}}$, the convergence rate is $O_p(N^{-\frac{1}{4}})$.

Note that the above convergence result is on $\mu_{Y|X}$, not on any particular $\mu_{Y|x}$, where the former is an average of the later weighted by $P$, the distribution of $X$. A naive choice of the ambiguity set would be $\mathscr{A}=\{\mu~|~||\mu-\hat{\mu}_{Y|x}||_\mathcal{G}\le \gamma N^{-\frac{1}{4}}\}$, where $\hat{\mu}_{Y|x}$ is the empirical conditional expectation operator evaluated at $X=x$, $N$ is the number of data points in the training set. However, note that the size of the ambiguity set does not change with $x$, indicating that the uncertainty on $\mu_{Y|x}$ is the same whether the queried $x$ is close to the samples or far away from the samples.

To account for the difference in queried $x$, we propose to use a fictitious sample at $X=x$ to help measure the uncertainty. Now suppose that a fictitious sample of $Y$ drawn from $\mathbb{P}(Y|X=x)$ is added to the samples, denoted as $[x,y_x]$. The empirical evaluation of the conditional mean operator $\mu_{Y|x}$ with the augmented sample set is then
\begin{equation*}
  \hat{\mu}_{Y|x} = {k^A_x}^\intercal (K^A+\lambda (N+1) I_{N+1})^{-1}\Phi^A,
\end{equation*}
where $A$ stands for ``augmented'',
\begin{align*}
k^A_x &=[k(x_1,x),\hdots,k(x_N,x),k(x,x)]^\intercal   \\ \Phi^A &=[\phi(y_1),\phi(y_2),\hdots,\phi(y_N),\phi(y_x)]^\intercal
\end{align*}
and $K^A=\begin{bmatrix}
                                                                                                                                                 K & k_x \\
                                                                                                                                                 k_x^\intercal & k(x,x)
                                                                                                                                               \end{bmatrix}$.
Let
\begin{equation}\label{eq:beta}
  \beta:={k^A_x}^\intercal (K^A+\lambda (N+1) I_{N+1})^{-1},
\end{equation}
then $\hat{\mu}_{Y|x}=\sum_{i=1}^N \beta_i \phi(y_i)+\beta_{N+1} \phi(y_x)$.

\begin{prop}\label{prop:amb}
 Suppose for all $y,y'\in\mathcal{Y},l(y,y')\le R$, the ambiguity set of $\hat{\mu}_{Y|x}$ can be written as $\mathcal{C}[x]=\{\mu~|~||\mu-\sum_{i=1}^{N}\beta_i \phi(y_i)||_\mathcal{G}\le |\beta_{N+1}|R+\gamma (N+1)^{-\frac{1}{4}}\}$ with $\beta$ defined in~\eqref{eq:beta}.
\end{prop}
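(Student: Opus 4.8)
The plan is to show that the true conditional mean operator $\mu_{Y|x}$ lies in $\mathcal{C}[x]$ with high probability, which is precisely what justifies calling $\mathcal{C}[x]$ an ambiguity set. I would decompose the augmented empirical estimate into its observable part and the single unknown term contributed by the fictitious sample. Since $\hat{\mu}_{Y|x}=\sum_{i=1}^N\beta_i\phi(y_i)+\beta_{N+1}\phi(y_x)$, adding and subtracting $\hat{\mu}_{Y|x}$ and applying the triangle inequality gives
\begin{equation*}
  ||\mu_{Y|x}-\sum_{i=1}^N\beta_i\phi(y_i)||_\mathcal{G}\le||\mu_{Y|x}-\hat{\mu}_{Y|x}||_\mathcal{G}+|\beta_{N+1}|\,||\phi(y_x)||_\mathcal{G}.
\end{equation*}
The two terms on the right correspond exactly to the two sources of uncertainty identified before the statement: the estimation error of the conditional mean operator, and the contribution of the unseen fictitious point.

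The second term is the clean, deterministic piece. By the reproducing property, $||\phi(y_x)||_\mathcal{G}^2=\langle\phi(y_x),\phi(y_x)\rangle_\mathcal{G}=l(y_x,y_x)$, so the uniform bound $l(y,y')\le R$ immediately controls $||\phi(y_x)||_\mathcal{G}$ and hence bounds this term by $|\beta_{N+1}|R$. This step requires no probabilistic argument: although the value $y_x$ is unknown, its feature vector has bounded norm, so its worst-case influence on the estimate is at most $|\beta_{N+1}|$ times that bound.

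For the first term I would invoke Lemma \ref{lem:conv} applied to the augmented data set of size $N+1$. The key observation is that the fictitious sample $y_x$ is drawn from the true conditional $\mathbb{P}(Y|X=x)$, so the augmented collection is still a consistent sample for estimating $\mu_{Y|x}$; the convergence guarantee therefore carries over with effective sample size $N+1$, and choosing $\lambda\sim(N+1)^{-\frac{1}{4}}$ yields the rate $O_p((N+1)^{-\frac{1}{4}})$. Absorbing the hidden constant into $\gamma$ gives $||\mu_{Y|x}-\hat{\mu}_{Y|x}||_\mathcal{G}\le\gamma(N+1)^{-\frac{1}{4}}$ with high probability. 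Combining the two bounds reproduces precisely the radius $|\beta_{N+1}|R+\gamma(N+1)^{-\frac{1}{4}}$ in the statement, so that $\mu_{Y|x}\in\mathcal{C}[x]$.

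The main obstacle is rigorously transferring Lemma \ref{lem:conv} to the augmented estimate. I expect the subtle points to be (i) verifying that appending one additional, correctly distributed sample preserves both the consistency and the range condition $\varphi(x)\in\mathrm{range}(C_{XX})$ required by that lemma, and (ii) being explicit that, because $y_x$ is never observed, it cannot enter the estimator, so its term must be retained in full inside the radius rather than estimated away. A minor bookkeeping issue is the passage from $||\phi(y_x)||_\mathcal{G}=\sqrt{l(y_x,y_x)}$ to the stated factor $R$, which is consistent provided $R$ is read as a bound on the feature norm $||\phi(y)||_\mathcal{G}$ (equivalently $\sqrt{l(y,y)}$) rather than on the kernel value itself.
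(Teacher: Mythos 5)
Your proposal follows essentially the same route as the paper's proof: the triangle inequality splitting $||\mu_{Y|x}-\sum_{i=1}^N\beta_i\phi(y_i)||_\mathcal{G}$ into the estimation error $||\mu_{Y|x}-\hat{\mu}_{Y|x}||_\mathcal{G}$ (bounded by $\gamma(N+1)^{-\frac{1}{4}}$ via Lemma \ref{lem:conv} on the augmented sample) plus the fictitious-sample term $|\beta_{N+1}|\,||\phi(y_x)||_\mathcal{G}$ (bounded via the kernel bound $R$). If anything, your write-up is more careful than the paper's one-line argument, which transposes which bound goes with which term (evidently a typo) and does not flag the two subtleties you correctly raise, namely that transferring Lemma \ref{lem:conv} to the augmented, never-observed sample needs justification, and that $||\phi(y_x)||_\mathcal{G}=\sqrt{l(y_x,y_x)}\le\sqrt{R}$, so the stated radius $|\beta_{N+1}|R$ is only literal if $R$ bounds the feature norm rather than the kernel value.
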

\begin{proof}
  By the triangular inequality, $||\mu_{Y|x}-\sum_{i=1}^{N}\beta_i \phi(y_i)||_\mathcal{G}\le ||\mu_{Y|x}-\hat{\mu}_{Y|x}||_\mathcal{G}+||\beta_{N+1}\phi(y_x)||_\mathcal{G}$, where the first term is bounded by $|\beta_{N+1}|R$, and the second term is bounded by $\gamma (N+1)^{-\frac{1}{4}}$ according to Lemma \ref{lem:conv}.
\end{proof}
The motivation for this formulation is to use the fictitious sample as a probe to measure how a newly added sample at $X=x$ would change the conditional expectation. For a queried $x$ that is close to the samples (as measured by the kernel), $|\beta_{N+1}|$ is small and the uncertainty is small; for an $x$ far away from the samples, the conditional expectation depends heavily on the fictitious sample, which is unknown, thus enlarging the ambiguity set.
\subsection{Connection to Wasserstein ambiguity set}

To better motivate the use of the RKHS norm as a metric for the ambiguity set definition, we establish  connections between the ambiguity set defined with the RKHS norm and the Wasserstein ball. Specifically, by Lemma \ref{lem:kanto}, the Wasserstein metric between two distributions can be characterized by some test function $f$ with Lipschitz constant bounded by 1.
\begin{lem}
  Given a translation invariant kernel $k$, suppose there exists a constant $\mathtt{L}$ such that $\forall x_1,x_2\in\mathcal{X}$, $d(x_1,x_2)\le \mathtt{L} ||x_1-x_2||$, then for any $f\in\mathcal{F}$ with a bounded RKHS norm, $f$ is Lipschitz modulo $\mathtt{L}||f||_\mathcal{F} $.
\end{lem}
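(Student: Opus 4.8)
The plan is to exploit the reproducing property together with the Cauchy--Schwarz inequality, thereby reducing the Lipschitz estimate to the simple observation that the RKHS distance between two feature maps coincides with the metric $d$ introduced in the preliminaries. First I would fix arbitrary $x_1,x_2\in\mathcal{X}$ and use the reproducing property $f(x)=\prF{f}{\varphi(x)}$ to write $f(x_1)-f(x_2)=\prF{f}{\varphi(x_1)-\varphi(x_2)}$. Applying Cauchy--Schwarz in $\mathcal{F}$ then gives $|f(x_1)-f(x_2)|\le \|f\|_\mathcal{F}\,\|\varphi(x_1)-\varphi(x_2)\|_\mathcal{F}$.

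The next step is to identify the feature-map distance with $d$. Expanding the squared norm via the reproducing property yields $\|\varphi(x_1)-\varphi(x_2)\|_\mathcal{F}^2 = k(x_1,x_1)-2k(x_1,x_2)+k(x_2,x_2)=d(x_1,x_2)^2$, so $\|\varphi(x_1)-\varphi(x_2)\|_\mathcal{F}=d(x_1,x_2)$. Substituting the hypothesis $d(x_1,x_2)\le \mathtt{L}\,\|x_1-x_2\|$ produces $|f(x_1)-f(x_2)|\le \mathtt{L}\,\|f\|_\mathcal{F}\,\|x_1-x_2\|$, which is exactly the claim that $f$ is Lipschitz with constant at most $\mathtt{L}\,\|f\|_\mathcal{F}$.

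Since every step is either an equality or a one-line inequality, there is no serious analytic obstacle here; the only points that require care are that the reproducing property is legitimately invoked precisely because $f\in\mathcal{F}$ has bounded RKHS norm, and that the argument holds for \emph{every} pair $(x_1,x_2)$, so the resulting bound is uniform. The subtler conceptual point I would address is where the translation-invariance assumption enters: it is not used in the inequality itself, but is the natural regime in which the comparison constant $\mathtt{L}$ exists. To make this concrete I would verify the hypothesis for the Gaussian kernel, where $d(x_1,x_2)^2 = 2\bigl(1-\exp(-\lambda\|x_1-x_2\|^2)\bigr)$ and the elementary bound $1-e^{-s}\le s$ for $s\ge 0$ gives $\mathtt{L}=\sqrt{2\lambda}$; an analogous estimate handles the Laplacian kernel.
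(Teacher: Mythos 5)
Your proof is correct and follows essentially the same route as the paper's: reproducing property, Cauchy--Schwarz, then the hypothesis $d(x_1,x_2)\le \mathtt{L}\|x_1-x_2\|$. The only difference is that you make explicit the identity $\|\varphi(x_1)-\varphi(x_2)\|_\mathcal{F}=d(x_1,x_2)$ (which the paper leaves implicit in its Cauchy--Schwarz step) and verify the hypothesis for the Gaussian kernel, where your constant $\sqrt{2\lambda}$ agrees with the paper's $1/\sigma$ under $\lambda=1/(2\sigma^2)$.
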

\begin{proof}
\begin{equation*}
  \begin{aligned}
  f(x_1)-f(x_2)&= \prF{f}{k(\cdot,x_1)}-\prF{f}{k(\cdot,x_2)}\\
  &=\prF{f}{k(\cdot,x_1)-k(\cdot,x_2)}\\
  &\le ||f||_\mathcal{F}d(x_1,x_2)\\
  &\le \mathtt{L}||f||_\mathcal{F}  ||x_1-x_2||.
  \end{aligned}
\end{equation*}
  where the first inequality is due to the Cauchy-Schwartz inequality.
\end{proof}
The constant $\mathtt{L}$ varies with the kernel $k$, for a Gaussian kernel $k(x_1,x_2)=\exp(-\frac{||x_1-x_2||^2}{2\sigma^2})$, $\mathtt{L}$ is simply $\frac{1}{\sigma}$.
\begin{prop}
  Consider all distributions $Q$ satisfying $\mathbb{E}_Q[k(x,x)]<\infty$ for all $x\in\mathcal{X}$, and the following two ambiguity sets:
  \begin{equation*}
    \begin{aligned}
    \mathscr{A}_1&=\{Q~|~d_W(Q,Q_0)||\le \epsilon\}\\
    \mathscr{A}_2&=\{Q~|~||\mu_Q-\mu_{Q_0}||_\mathcal{F}\le \epsilon\mathtt{L}\},
    \end{aligned}
  \end{equation*}
$\mathscr{A}_2\subseteq\mathscr{A}_1$.
\end{prop}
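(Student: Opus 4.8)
The plan is to pass to the dual descriptions of both metrics and try to dominate the Wasserstein distance by the RKHS-norm distance. Fix $Q\in\mathscr{A}_2$, so that $||\mu_Q-\mu_{Q_0}||_\mathcal{F}\le\epsilon\mathtt{L}$; the goal is to deduce $d_W(Q,Q_0)\le\epsilon$, i.e. $Q\in\mathscr{A}_1$. By the Kantorovich-Rubinstein identity (Lemma~\ref{lem:kanto}), $d_W(Q,Q_0)=\sup_{L(g)\le 1}\{\int_\mathcal{X} g\,dQ-\int_\mathcal{X} g\,dQ_0\}$, so it suffices to bound the displacement $\int_\mathcal{X} g\,dQ-\int_\mathcal{X} g\,dQ_0$ by $\epsilon$ uniformly over every $1$-Lipschitz test function $g$.

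The portion of this program that closes cleanly concerns test functions lying in $\mathcal{F}$. For such $g$ the reproducing property turns the displacement into an inner product, $\int_\mathcal{X} g\,dQ-\int_\mathcal{X} g\,dQ_0=\prF{\mu_Q-\mu_{Q_0}}{g}$, and Cauchy-Schwartz bounds it by $||\mu_Q-\mu_{Q_0}||_\mathcal{F}\,||g||_\mathcal{F}\le\epsilon\mathtt{L}\,||g||_\mathcal{F}$. The preceding Lemma is precisely the bridge between the two function scales: since $L(g)\le\mathtt{L}||g||_\mathcal{F}$, any $g\in\mathcal{F}$ with $||g||_\mathcal{F}\le 1/\mathtt{L}$ is automatically admissible for the Wasserstein dual, and for such $g$ the displacement is at most $\epsilon\mathtt{L}\cdot(1/\mathtt{L})=\epsilon$. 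Hence the dual functional is already controlled by $\epsilon$ on RKHS test functions of norm at most $1/\mathtt{L}$.

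The main obstacle is that the Kantorovich-Rubinstein supremum runs over \emph{all} $1$-Lipschitz functions, while the RKHS calculation only reaches those $g$ that belong to $\mathcal{F}$, and only through $||g||_\mathcal{F}$ rather than $L(g)$. To finish in the stated direction I would need either a reverse comparison $||g||_\mathcal{F}\le C\,L(g)$ valid on the Lipschitz class, so that Cauchy-Schwartz could absorb every admissible $g$, or a density argument showing the Wasserstein dual is attained inside $\mathcal{F}$. For the translation-invariant kernels emphasized here (Gaussian, Laplacian) neither is available: the associated RKHS is strictly smaller than the space of Lipschitz functions, and a generic $1$-Lipschitz maximizer of the dual carries no finite RKHS norm. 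Indeed, the only comparison the preceding Lemma actually yields is the opposite inequality $||\mu_Q-\mu_{Q_0}||_\mathcal{F}\le\mathtt{L}\,d_W(Q,Q_0)$ --- obtained from $\{f:||f||_\mathcal{F}\le 1\}\subseteq\{f:L(f)\le\mathtt{L}\}$ by comparing the two variational problems --- which gives $\mathscr{A}_1\subseteq\mathscr{A}_2$. I therefore expect the proposition as worded to require either replacing $d_W$ by a kernel-restricted transport metric (supremum over $g\in\mathcal{F}$ only) or interchanging the two sets; with $d_W$ as defined the obstacle is genuine, since the displaced point masses $Q_0=\delta(0),\,Q=\delta(T)$ keep $||\mu_Q-\mu_{Q_0}||_\mathcal{F}$ bounded while $d_W(Q,Q_0)=T\to\infty$, which would violate $\mathscr{A}_2\subseteq\mathscr{A}_1$ for large $T$.
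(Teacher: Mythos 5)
Your diagnosis is correct, and it is in fact corroborated by the paper itself: the paper's proof establishes exactly the inequality you derive, not the inclusion stated in the proposition. The displayed proof bounds $\|\mu_Q-\mu_{Q_0}\|_\mathcal{F}=\mathtt{L}\sup\{\langle \mu_Q-\mu_{Q_0},f\rangle : f\in\mathcal{F},\ \|f\|_\mathcal{F}\le 1/\mathtt{L}\}$ by enlarging the class of test functions to all $1$-Lipschitz functions (via the preceding Lemma, which gives $\{f:\|f\|_\mathcal{F}\le 1/\mathtt{L}\}\subseteq\{f: L(f)\le 1\}$) and invoking Kantorovich--Rubinstein, obtaining $\|\mu_Q-\mu_{Q_0}\|_\mathcal{F}\le \mathtt{L}\, d_W(Q,Q_0)$; it then concludes with ``for any $Q\in\mathscr{A}_1$, we have $\|\mu_Q-\mu_{Q_0}\|_\mathcal{F}\le \mathtt{L}\epsilon$,'' which is verbatim the inclusion $\mathscr{A}_1\subseteq\mathscr{A}_2$ --- the reverse of the stated $\mathscr{A}_2\subseteq\mathscr{A}_1$. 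Your ``opposite inequality,'' obtained by comparing the two variational problems over nested test classes, is therefore identical in substance to the paper's own argument; the error lies in the proposition's wording (and in the prose sentence that follows it), not in your mathematics.

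Your explanation of why the stated direction cannot be repaired is also the right one: for the bounded translation-invariant kernels the paper emphasizes, $\mathcal{F}$ embeds into the Lipschitz class with no reverse comparison $\|g\|_\mathcal{F}\le C\,L(g)$, so Cauchy--Schwartz only controls the Kantorovich dual on the strictly smaller RKHS ball of test functions. Your point-mass counterexample is valid and easily quantified: for the Gaussian kernel, $\|\mu_{\delta_T}-\mu_{\delta_0}\|_\mathcal{F}^2 = 2\bigl(1-k(T,0)\bigr)\le 2$, so once $\epsilon\mathtt{L}\ge\sqrt{2}$ every point mass, however distant, lies in $\mathscr{A}_2$, while $d_W(\delta_T,\delta_0)=\|T\|$ grows without bound --- consistent with the paper's own observation that the RKHS distance saturates at $\sqrt{2}$ and ``discounts the distance when two points are far away.'' The correct, provable statement, which both you and the paper's displayed proof actually establish, is $\mathscr{A}_1\subseteq\mathscr{A}_2$: the RKHS ball of radius $\epsilon\mathtt{L}$ contains the Wasserstein ball of radius $\epsilon$, so kernel DRO over $\mathscr{A}_2$ is the more conservative problem and upper-bounds Wasserstein DRO over $\mathscr{A}_1$, not the other way around.
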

\begin{proof}
  By Lemma \ref{lem:kanto},
  \begin{equation*}
  \resizebox{1\columnwidth}{!}{$
    \begin{aligned}
    ||\mu_Q-\mu_{Q_0}||_\mathcal{F}&=\mathtt{L}\sup_{f\in\mathcal{F},||f||_\mathcal{F}\le \frac{1}{\mathtt{L}}}\langle \mu_Q-\mu_{Q_0},f\rangle\\
    &= \mathtt{L} \sup_{f\in\mathcal{F},||f||_\mathcal{F}\le \frac{1}{\mathtt{L}}}\{\int_\mathcal{X} f(x)Q_1(dx)-\int_\mathcal{X} f(x)Q_2(dx)\}\\
    &\le \mathtt{L} \sup_{L(f)\le 1}\{\int_\mathcal{X} f(x)Q_1(dx)-\int_\mathcal{X} f(x)Q_2(dx)\}\\
    &=\mathtt{L} d_W(Q,Q_0).
    \end{aligned}
    $}
  \end{equation*}
For any $Q\in\mathscr{A}_1$, we have $||\mu_Q-\mu_{Q_0}||_\mathcal{F}\le \mathtt{L} d_W(Q,Q_0)\le \mathtt{L} \epsilon$.
\end{proof}
The ambiguity set defined with the RKHS norm can be viewed as a subset of the ambiguity set defined via the Wasserstein metric. In fact, $\mathscr{A}_2$ can be viewed as the Wasserstein ball under a different distance measure.
\begin{prop}\label{prop:wass_RKHS}
  Suppose the Wasserstein metric is defined as
  \begin{equation*}
    \begin{aligned}
    \bar{d}_W(Q_1,Q_2)& :=\inf_{\Pi}\int_{\mathcal{X}^2}d(x_1,x_2)\Pi(x_1,x_2) dx_1 dx_2\\
    \mathrm{s.t.}~& \int_\mathcal{X} \Pi(x_1,x_2) dx_2= Q_1(x_1),\\
     &\int_\mathcal{X} \Pi(x_1,x_2) dx_1= Q_2(x_2),
    \end{aligned}
  \end{equation*}
  where $d(x_1,x_2)$ is the distance metric in an RKHS $\mathcal{F}$, then the dual form of the Wasserstein metric is
  \begin{equation*}
    \bar{d}_W(Q_1,Q_2) = \sup_{||f||_\mathcal{F}\le 1}\{\int_\mathcal{X} f(x)Q_1(dx)-\int_\mathcal{X} f(x)Q_2(dx)\}.
  \end{equation*}
\end{prop}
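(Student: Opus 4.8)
The plan is to read this statement as the Kantorovich--Rubinstein duality of Lemma~\ref{lem:kanto}, but with the Euclidean ground cost $||x_1-x_2||$ of Definition~\ref{def:wass} replaced by the RKHS metric $d(x_1,x_2)=||\varphi(x_1)-\varphi(x_2)||_\mathcal{F}$. Since $d$ is a genuine metric on $\mathcal{X}$ (it is the pullback of the Hilbert-space norm through the feature map $\varphi$, hence symmetric, nonnegative, and satisfying the triangle inequality), the Kantorovich--Rubinstein theorem applies verbatim with $d$ in place of $||\cdot||$, provided $d$ is lower semicontinuous, which holds whenever $k$ is continuous. The first step is therefore to record that $\bar{d}_W(Q_1,Q_2)=\sup_{L_d(f)\le1}\{\int_\mathcal{X} f\,dQ_1-\int_\mathcal{X} f\,dQ_2\}$, where $L_d(f)$ is the smallest constant with $|f(x_1)-f(x_2)|\le L_d(f)\,d(x_1,x_2)$.

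It then remains to show that replacing the feasible set $\{f:L_d(f)\le1\}$ by the RKHS unit ball $\{f\in\mathcal{F}:||f||_\mathcal{F}\le1\}$ leaves the supremum unchanged. One inclusion is immediate from the preceding lemma: for $||f||_\mathcal{F}\le1$ we have $f(x_1)-f(x_2)=\langle f,\varphi(x_1)-\varphi(x_2)\rangle_\mathcal{F}\le||f||_\mathcal{F}\,d(x_1,x_2)\le d(x_1,x_2)$ by Cauchy--Schwarz, so the RKHS unit ball sits inside the $d$-$1$-Lipschitz ball and consequently $\sup_{||f||_\mathcal{F}\le1}\int f\,d(Q_1-Q_2)\le\bar{d}_W(Q_1,Q_2)$. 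This direction costs nothing beyond the definition of $d$.

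For the reverse inequality I would attempt to bypass the $d$-Lipschitz class and dualize the transport problem directly, using the variational form of the ground cost. The identity $||v||_\mathcal{F}=\sup_{||h||_\mathcal{F}\le1}\langle h,v\rangle$ applied to $v=\varphi(x_1)-\varphi(x_2)$, together with the reproducing property, gives $d(x_1,x_2)=\sup_{||h||_\mathcal{F}\le1}\bigl(h(x_1)-h(x_2)\bigr)$, so that $\bar{d}_W(Q_1,Q_2)=\inf_\Pi\int_{\mathcal{X}^2}\sup_{||h||_\mathcal{F}\le1}\bigl(h(x_1)-h(x_2)\bigr)\,d\Pi$. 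For a fixed test function $h$ the marginal constraints collapse $\int_{\mathcal{X}^2}(h(x_1)-h(x_2))\,d\Pi$ to the $\Pi$-independent quantity $\int h\,dQ_1-\int h\,dQ_2$, so if one may pull $\sup_h$ out of the integral and exchange it with $\inf_\Pi$, the value becomes exactly $\sup_{||h||_\mathcal{F}\le1}\{\int h\,dQ_1-\int h\,dQ_2\}$, the claimed expression.

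The main obstacle is that both exchanges go the wrong way for free: pulling the pointwise supremum out of the integral only gives $\int\sup_h\ge\sup_h\int$, and the minimax inequality only gives $\inf_\Pi\sup_h\ge\sup_h\inf_\Pi$. To upgrade these to equalities I would invoke Sion's minimax theorem, needing convexity of the coupling set and of the RKHS unit ball, weak compactness of the unit ball of $\mathcal{F}$ (bounded closed convex sets in a Hilbert space are weakly compact), and lower semicontinuity of $\Pi\mapsto\int(h(x_1)-h(x_2))\,d\Pi$. The genuinely delicate point is tightness: a single RKHS direction $h$ cannot in general match $d(x_1,x_2)$ simultaneously at every pair charged by the optimal coupling, so one must argue that the extremal Kantorovich potential can nonetheless be realized inside the weak closure of the RKHS unit ball. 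This is where the proof must do its real work, and I would expect it to require the kernel to be rich enough relative to $(Q_1,Q_2)$; absent such a condition the equality degrades to the inequality $\sup_{||f||_\mathcal{F}\le1}\le\bar{d}_W$ obtained in the second step, which is the direction that is robustly true.
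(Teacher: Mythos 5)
Your reluctance to complete the final step is the right mathematical call, and the conclusion is stronger than your hedged statement: the reverse inequality is not merely delicate, it is \emph{false} in general, so the proposition cannot be proved as stated. This is worth knowing because the paper's own proof consists of exactly the step you declined to take: it asserts the result ``follows exactly the Kantorovich--Rubinstein duality theorem after replacing the Euclidean distance with the RKHS distance metric $d$,'' and omits the rest. As your first paragraph makes precise, KR duality with ground cost $d$ yields the supremum over the $d$-Lipschitz unit ball $\{f: |f(x_1)-f(x_2)|\le d(x_1,x_2)\}$, and the proposition tacitly identifies that class with the RKHS unit ball $\{f\in\mathcal{F}: \|f\|_\mathcal{F}\le 1\}$. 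Your Cauchy--Schwarz inclusion is the only containment that holds; it is strict, and the two suprema genuinely differ. A concrete counterexample: take three points $a,b,c$ pairwise far apart under the paper's own Gaussian RBF kernel, so that $k(a,b)\approx k(a,c)\approx k(b,c)\approx 0$ and $k(x,x)=1$, hence $d\approx\sqrt{2}$ for each pair; let $Q_1=\delta_a$ and $Q_2=\tfrac{1}{2}(\delta_b+\delta_c)$. Since $Q_1$ is a point mass, the only admissible coupling is $\delta_a\otimes Q_2$, so $\bar{d}_W(Q_1,Q_2)=\tfrac{1}{2}d(a,b)+\tfrac{1}{2}d(a,c)\approx\sqrt{2}$, while the right-hand side of the proposition equals $\|\mu_{Q_1}-\mu_{Q_2}\|_\mathcal{F}=\bigl(k(a,a)+\tfrac{1}{4}k(b,b)+\tfrac{1}{4}k(c,c)-k(a,b)-k(a,c)+\tfrac{1}{2}k(b,c)\bigr)^{1/2}\approx\sqrt{3/2}<\sqrt{2}$. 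The KR supremum is (essentially) attained by $f=\sqrt{2}\,k(a,\cdot)$, which is $1$-Lipschitz with respect to $d$ on $\{a,b,c\}$ but has RKHS norm $\sqrt{2}$: precisely the kind of test function the RKHS ball excludes.

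Two refinements to your diagnosis. First, the obstruction is not where you place it: Sion's theorem and the $\inf_\Pi$--$\sup_h$ exchange are beside the point, because once the supremum is pulled outside the integral the objective no longer depends on $\Pi$ at all. The loss occurs in exchanging $\sup_h$ with the integral against $\Pi$: the maximizing direction $h=(\varphi(x_1)-\varphi(x_2))/d(x_1,x_2)$ depends on the pair $(x_1,x_2)$, and no single $h$ in the unit ball serves all pairs charged by the coupling. The example above shows this loss is real, and no ``richness'' assumption on the kernel can remove it, since it already occurs for the universal Gaussian kernel. Second, what survives is exactly your robust direction, $\sup_{\|f\|_\mathcal{F}\le 1}\{\int f\,dQ_1-\int f\,dQ_2\}=\|\mu_{Q_1}-\mu_{Q_2}\|_\mathcal{F}\le\bar{d}_W(Q_1,Q_2)$, a known fact in the kernel-embedding literature (the maximum mean discrepancy lower-bounds the Wasserstein distance with the kernel ground metric). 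This one-sided bound still supports the qualitative message the paper draws from the proposition, namely that an RKHS-norm ball of radius $\epsilon$ contains the $\bar{d}_W$-ball of radius $\epsilon$; but the claimed equality, and hence the sentence ``the ambiguity set defined as an RKHS norm ball is simply a Wasserstein ball,'' should be weakened to that containment.
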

\begin{proof}
  The proof follows exactly the Kantorovich Rubinstein duality theorem \cite{edwards2011kantorovich} after replacing the Euclidean distance with the RKHS distance metric $d$, and is omitted here.
\end{proof}
By Proposition \ref{prop:wass_RKHS}, the ambiguity set defined as an RKHS norm ball is simply a Wasserstein ball with $d$ as the distance metric. For a Gaussian RBF kernel, the relationship between $d$ and the Euclidean distance is plotted in \cref{fig:RBF}. Comparing to the original Wasserstein ball in \ref{def:wass}, the RKHS distance ``saturates'' at $\sqrt{2}$, which discounts the distance when two points are far away.
\begin{figure}
  \centering
  \includegraphics[width=0.8\columnwidth]{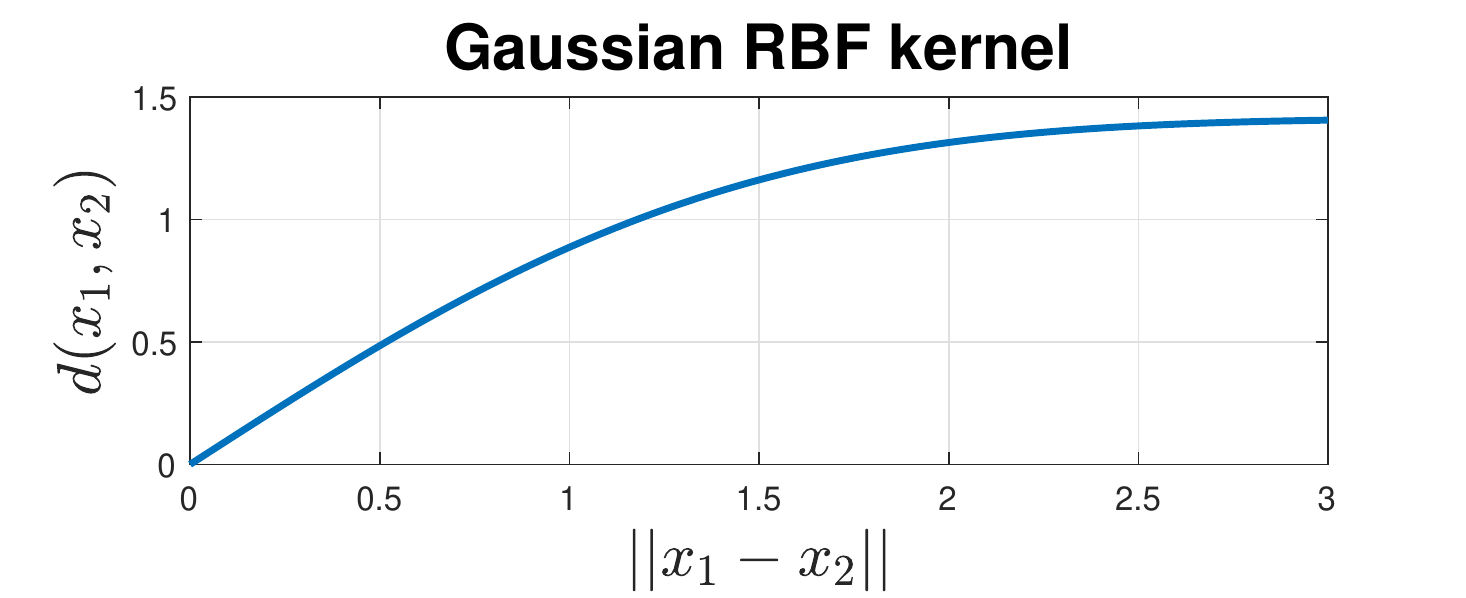}
  \caption{RKHS distance v.s. Euclidean distance with Gaussian RBF kernel}\label{fig:RBF}
\end{figure}
\subsection{Solution to the conditional kernel DRO}
For brevity of notation, given the samples $\{x_i,y_i\}_{i=1}^N$ and a particular $x\in\mathcal{X}$, let $\hat{\mu}[x]:=\sum_{i=1}^N \beta_i \phi(y_i)$ with $\beta$ defined in~\eqref{eq:beta}, and $\mathcal{C}[x]=\{\mu\in\mathcal{G}~|~||\mu-\hat{\mu}[x]||_\mathcal{G}\le \epsilon_x\}$ be the ambiguity set defined in Proposition \ref{prop:amb}. The conditional kernel DRO is the following optimization:
\begin{equation}\label{eq:ckdro}
\begin{aligned}
\min_\eta \max_{Q\in co(\mathcal{Q}),\mu\in\mathcal{C}[x]} &~\int_\mathcal{Y} q(\eta,y)Q(dy)\\
\mathrm{s.t.} &~\int_{\mathcal{X}} 1 Q(dy)=1,\quad \int_{\mathcal{X}} \phi(y) Q(dy)=\mu,
\end{aligned}
\end{equation}
where $co(\cdot)$ is the conic hull. The inner maximization looks for a distribution $Q$ whose corresponding expectation operator $\mu$ lies inside $\mathcal{C}[x]$ and maximizes the expected cost.
\begin{thm}\label{thm:dual_CDRO}
  The dual form of~\eqref{eq:ckdro} is
\begin{equation}\label{eq:dual_CDRO}
  \begin{aligned}
  \min_{\eta,f\in\mathcal{G},f_0\in \mathbb{R}} ~&f_0 + \sum_{i=1}^N \beta_i f(y_i)+\epsilon_x ||f||_\mathcal{G}\\
  \mathrm{s.t.}~&\forall y\in\mathcal{Y}, q(\eta,y)\le f(y)+f_0.
  \end{aligned}
\end{equation}
\end{thm}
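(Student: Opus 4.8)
The plan is to fix the outer variable $\eta$ and dualize only the inner maximization of~\eqref{eq:ckdro}, then recognize the resulting expression as the objective and constraint of~\eqref{eq:dual_CDRO}. The inner problem is a linear program over the nonnegative measure $Q$ (recall $co(\mathcal{Q})$ is the conic hull, so $Q$ ranges over nonnegative measures that the constraint $\int_\mathcal{Y} 1\,Q(dy)=1$ normalizes to a probability measure) coupled with a convex ball constraint $\mu\in\mathcal{C}[x]$. I would attach a scalar multiplier $f_0\in\mathbb{R}$ to the normalization constraint and an element $f\in\mathcal{G}$ to the moment-matching constraint $\int_\mathcal{Y}\phi(y)Q(dy)=\mu$, and form the Lagrangian
\[
L = f_0 + \int_\mathcal{Y}\bigl(q(\eta,y)-f_0-f(y)\bigr)Q(dy) + \langle f,\mu\rangle_\mathcal{G},
\]
where the reproducing property $\langle f,\phi(y)\rangle_\mathcal{G}=f(y)$ has been used to rewrite $\langle f,\int_\mathcal{Y}\phi(y)Q(dy)\rangle_\mathcal{G}=\int_\mathcal{Y} f(y)Q(dy)$.

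The maximization then decouples additively in $Q$ and $\mu$. First I would maximize over the nonnegative measure $Q$: the term $\int_\mathcal{Y}(q(\eta,y)-f_0-f(y))Q(dy)$ is bounded above, with supremum $0$ attained at $Q=0$, precisely when $q(\eta,y)-f_0-f(y)\le 0$ for every $y\in\mathcal{Y}$, which is exactly the semi-infinite constraint $q(\eta,y)\le f(y)+f_0$ of~\eqref{eq:dual_CDRO}; otherwise the supremum is $+\infty$. Next I would maximize over $\mu$ in the ball $\{\mu:\,\|\mu-\hat{\mu}[x]\|_\mathcal{G}\le\epsilon_x\}$: writing $\mu=\hat{\mu}[x]+\delta$ with $\|\delta\|_\mathcal{G}\le\epsilon_x$ and applying Cauchy--Schwarz (tight in a Hilbert space), one gets $\max_\mu\langle f,\mu\rangle_\mathcal{G}=\langle f,\hat{\mu}[x]\rangle_\mathcal{G}+\epsilon_x\|f\|_\mathcal{G}$. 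Since $\hat{\mu}[x]=\sum_{i=1}^N\beta_i\phi(y_i)$, the reproducing property gives $\langle f,\hat{\mu}[x]\rangle_\mathcal{G}=\sum_{i=1}^N\beta_i f(y_i)$. Collecting the surviving terms, the inner supremum equals $f_0+\sum_{i=1}^N\beta_i f(y_i)+\epsilon_x\|f\|_\mathcal{G}$ on the feasible set, and minimizing jointly over $\eta$, $f$, and $f_0$ reproduces~\eqref{eq:dual_CDRO}.

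The step requiring the most care is showing that no duality gap is incurred, so that the value of~\eqref{eq:ckdro} truly equals the minimum over the dual variables of the Lagrangian dual just computed. Because the inner problem is an infinite-dimensional moment problem --- linear in the measure $Q$ over the cone of nonnegative measures, with a convex norm constraint on $\mu$ --- weak duality is immediate from the Lagrangian construction, but strong duality is not automatic. I would secure it via a Slater-type condition: it suffices that the conditional embedding be strictly feasible, i.e.\ that some probability measure have embedding lying in the interior of $\mathcal{C}[x]$, which holds whenever $\epsilon_x>0$ and $\hat{\mu}[x]$ is itself a valid embedding, so that the conic-duality theorem for generalized moment problems applies. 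This is the same mechanism underlying the kernel DRO duality of~\cite{zhu2020kernel}; the only additional bookkeeping relative to the unconditional case is that the nominal embedding is the conditional estimate $\hat{\mu}[x]$ and the radius is the query-dependent $\epsilon_x$ supplied by Proposition~\ref{prop:amb}.
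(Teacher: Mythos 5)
Your proposal is correct and follows essentially the same route as the paper: dualize the inner maximization with a scalar multiplier $f_0$ for the normalization constraint and an RKHS element $f$ for the moment constraint, observe that maximizing over nonnegative measures $Q$ forces the semi-infinite constraint $q(\eta,y)\le f(y)+f_0$, compute the support function of the ball $\mathcal{C}[x]$ (the paper phrases this via the Fenchel conjugate of the indicator $\delta_{\mathcal{C}[x]}$, you via tight Cauchy--Schwarz, which is the same computation), and invoke strict feasibility for strong duality. Your treatment of the Slater condition is in fact slightly more explicit than the paper's one-line claim, but the argument is the same.
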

\begin{proof}
  The dual form was derived in \cite{zhu2020kernel}, we extend it to the conditional distribution case and present the derivation for completeness. First focus on the inner maximization, the Lagrangian of the of which is
  \begin{equation*}
  \resizebox{1\columnwidth}{!}{$
  \begin{aligned}
  \mathcal{L}(\mu,P,f,f_0)&=\int_\mathcal{Y} q(\eta,y)Q(dy)- \delta_{\mathcal{C}[x]}(\mu)\\
   &+\langle f,\mu-\int_\mathcal{Y} \phi(y)Q(dy)\rangle + f_0(1-\int_\mathcal{Y}1Q(dy))\\
  & = f_0 + \langle f,\mu\rangle - \delta_{\mathcal{C}[x]}(\mu) + \int_\mathcal{Y} q(\eta,y)-f(y)-f_0 Q(dy),
  \end{aligned}
  $}
  \end{equation*}
   where $f_0\in\mathbb{R}$ is the Lagrange multiplier of the first constraint in \eqref{eq:ckdro} and $f\in\mathcal{G}$ is the Lagrange multiplier of the second constraint. $\delta_{\mathcal{C}[x]}$ is the indicator function of $\mathcal{C}[x]$, which is 0 if $\mu\in\mathcal{C}[x]$, $\infty$ otherwise. The only way to upper bound $\mathcal{L}$ is to make $q(\eta,y)-f(y)-f_0\le 0$ for all $y\in\mathcal{Y}$. With $f_0$ and $f$ fixed that satisfy $q(\eta,y)-f(y)-f_0$, the maximization is only over $\mu\in\mathcal{C}[x]$ of the function $\langle f,\mu\rangle - \delta_{\mathcal{C}[x]}(\mu)$. Since $\delta_{\mathcal{C}[x]}$ is convex, by Fenchel duality,
   \begin{equation*}
     \max_{\mu\in\mathcal{G}}  \langle f,\mu\rangle - \delta_{\mathcal{C}[x]}(\mu)=\delta^*_{\mathcal{C}[x]}(f)=\max_{\mu\in\mathcal{C}[x]} \langle f,\mu\rangle,
   \end{equation*}
   where $\delta^*_{\mathcal{C}[x]}$ is the Fenchel dual of $\delta_{\mathcal{C}[x]}$. By the definition of $\mathcal{C}[x]$ in Proposition \ref{prop:amb},
   \begin{align*}
     \delta^*_{\mathcal{C}[x]}(f)&=\max_{\mu\in\mathcal{C}[x]} \langle f,\mu\rangle=\max_{\mu\in\mathcal{C}[x]} \langle f,\hat{\mu}[x]\rangle + \langle f,\mu-\hat{\mu}[x]\rangle  \\
     & = \langle f,\hat{\mu}[x]\rangle + \epsilon_x||f||_\mathcal{G}.
   \end{align*}
  The last equality comes from the fact that we can choose $\mu$ such that $\mu-\hat{\mu}[x]=\epsilon_x \frac{f}{||f||_\mathcal{G}}$. The dual optimization is then
  \begin{equation*}
  \begin{aligned}
  \min_{f\in\mathcal{G},f_0\in\mathbb{R}}& f_0 + \langle f,\hat{\mu}[x]\rangle + \epsilon_x||f||_\mathcal{G}\\
  \mathrm{s.t.} & \forall y\in\mathcal{Y},q(\eta,y)\le f_0+f(y).
  \end{aligned}
  \end{equation*}
  Combining with the outer minimization, the dual form of \eqref{eq:ckdro} is the optimization in \eqref{eq:dual_CDRO}. Since the inner optimization is strictly feasible, strong duality holds, and there is no duality gap.
\end{proof}
Given an $x\in\mathcal{X}$, $\hat{\mu}[x]=\sum_{i=1}^N \beta_i \phi(y_i)$, by the reproducing property, $\langle f,\hat{\mu}[x]\rangle = \sum_{i=1}^N \beta_i f(y_i)$. However, the inequality constraint in \eqref{eq:dual_CDRO} cannot be strictly enforced. Instead, the condition is enforced on a finite certification set of $\{y_j\}_{j=1}^M$. $f$ is a function in RKHS, which is also finitely parameterized by the certification set as $f=\sum_{j=1}^M \alpha_j \phi(y_j)$. The finite optimization we end up solving is then
\begin{equation}\label{eq:dual_CDRO_emp}
  \begin{aligned}
  \min_{\eta,\alpha\in\mathbb{R}^M,f_0\in \mathbb{R}} ~&f_0 + \sum_{i=1}^N \beta_i f(y_i)+\epsilon_x ||f||_\mathcal{G}\\
  \mathrm{s.t.}~& q(\eta,y_j)\le f(y_j)+f_0,~j=1,...,M,
  \end{aligned}
\end{equation}
where $f(y)=\sum_{j=1}^M \alpha_j l(y_j,y)$, $||f||_\mathcal{G}=\sqrt{\alpha^\intercal L \alpha}$. Here $L\in\mathbb{R}^{M\times M}$ is computed as $L_{ij}=l(y_i,y_j)$.

\section{Application to Generation Scheduling}\label{sec:OPF}

\begin{figure}[th]
  \centering
  \includegraphics[width=1\columnwidth]{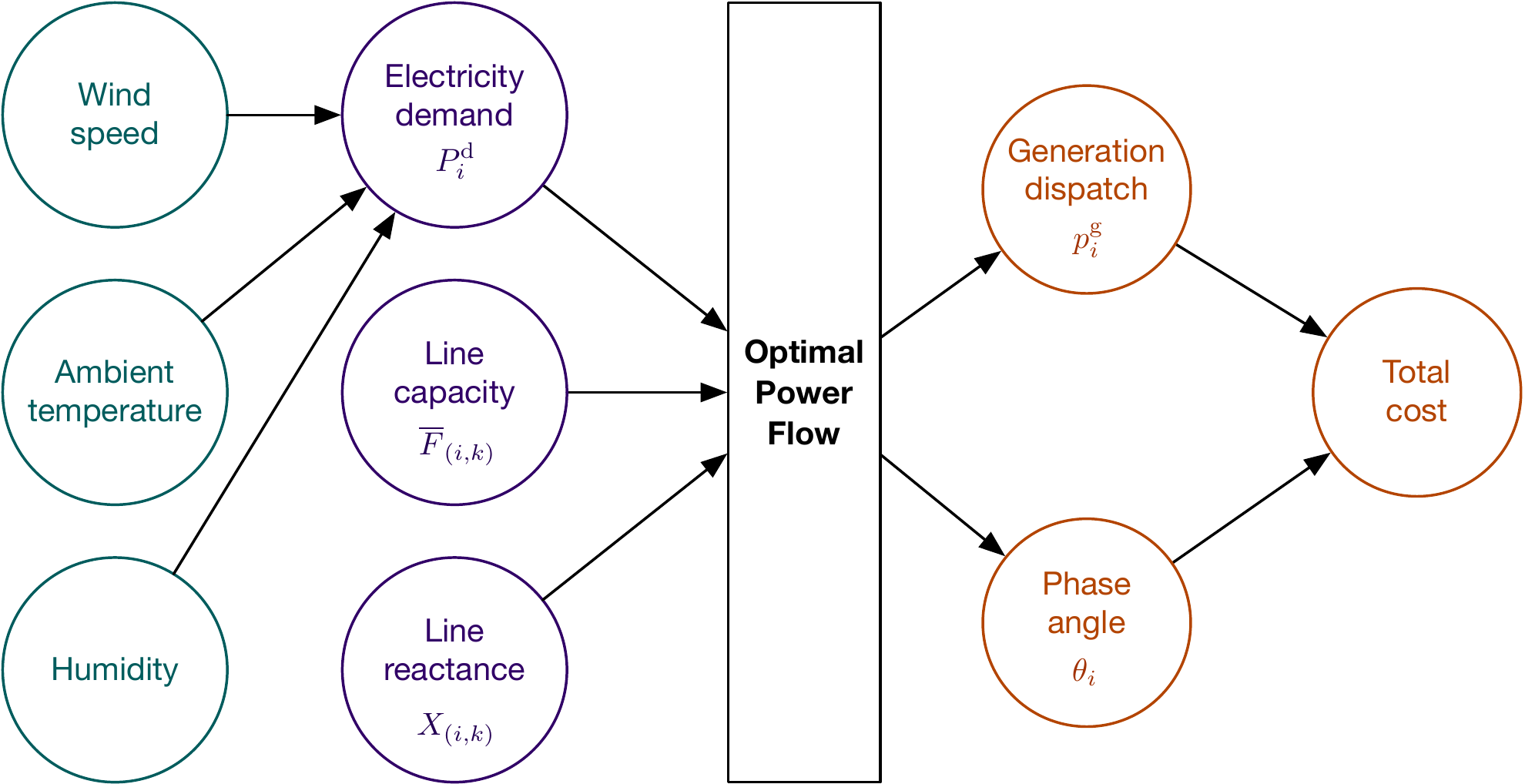}
  \caption{Causality map of the OPF problem}\label{fig:causality}
\end{figure}

As an application of the conditional kernel DRO, we consider the problem of scheduling power generation for power grid. We formulate this via a DC optimal power flow (DC-OPF) problem; a constrained optimization problem that determines generation dispatch to serve electricity demand in the transmission network at a minimal cost subject to operational constraints.
% This section reviews the optimal power flow (OPF) formulation and its linear approximation, so-called, DC-OPF which is used for the wholesale market settlements in practice.
\subsection{Background of DC-OPF}
The DC-OPF problem is formulated as:
\begin{subequations}\label{eq:OPF}
\begin{align}
  \min_{\pgen,\theta}\hspace{4mm}
      & \sum_{j\in{\cal G}}
        \left[
          a_j (\pgen_j)^2 + b_j \pgen_j + c_j
        \right]
      \label{eq:opf_obj}\\
  % \mathrm{s.t.} & \nonumber\\
  \mathrm{s.t.}\hspace{4mm}& \sum_{j\in{\cal G}_i}\pgen_j - P^{\mathrm{d}}_i = \sum_{k\in{\cal N}}
      \frac{\theta_{i} - \theta_{k}}{X_{ik}},\quad\forall i \in{\cal N},
      \label{eq:opf_pbalance}\\
  &\underline{P}^{\mathrm{g}}_{j} \leq \pgen_{j} \leq \overline{P}^{\mathrm{g}}_{j}, \quad\forall j \in \cal{G}, \label{eq:opf_gpbounds}\\
  &\underline{F}_{ik} \le \frac{1}{X_{ik}}(\theta_{i} - \theta_{k}) \le \overline{F}_{ik},\quad\forall i,k\in{\cal N}.\label{eq:opf_fbounds}
\end{align}
\end{subequations}
where the active power output of generator $j\in{\cal G}$ is denoted as $\pgen_j$ and the nodal phase angle of bus $i\in{\cal N}$ is denoted as $\theta_i$.
The objective function in \eqref{eq:opf_obj} minimizes the total quadratic generation cost with parameters $a_j$, $b_j$ and $c_j$.
Equation~\eqref{eq:opf_pbalance}, i.e. a linear approximation of the power flow equation, enforces the active nodal power balance where $Y_{ik}$ is the reactance of transmission line $(i,k)$.
Active power output of generators is constrained in~\eqref{eq:opf_gpbounds} using their lower and upper bound limits ($\underline{P}_j^{\mathrm{g}}$, $\overline{P}_j^{\mathrm{g}}$).
Lastly, the power flow limits of each transmission line are constrained with lower and upper bounds ($\underline{F}_{ik}$, $\overline{F}_{ik}$) in \eqref{eq:opf_fbounds}.
\begin{rem}
In practice, we add slack variables to relax the equality constraint \eqref{eq:opf_pbalance} to improve feasibility, but we found that the slack variable is always close to zero.
\end{rem}
\subsection{Conditional DC-OPF problem setup}
% Unfortunately, it is usually difficult to obtain the power demand $P^d$ at the time of OPF.
Unfortunately, the power demand $P^{\mathrm{d}}_i$ cannot be predicted perfectly at the time of OPF. To robustify the OPF solution against the prediction error, we consider a two-stage optimization where the decision variables are divided into two groups, the here-and-now (HAN) variables and the wait-and-see (WAS) variables \cite{wets2002stochastic}, where the HAN variables are determined in the first stage, the WAS variables can be determined after an observation of the uncertain parameters is made. Similar formulations have been applied to generation scheduling problems (c.f. \cite{zhang2019two,zhao2019two}), here we consider a simplified two-stage DC-OPF problem. The first stage determines the nominal generation dispatch $\pref$, then the second stage determine the actual generation by solving the following optimization:
\begin{equation}\label{eq:second_stage_OPF}
\begin{aligned}
   \mathrm{OPF}(P^d,\pref)=&\mathop{\min}\limits_{\theta,\pgen}
      \mathcal{J}_n(\pgen) + {\color{red}\mathcal{J}_a(\pref,\pgen)}\\
  \mathrm{s.t.}\; & {\color{red}\pgen\in \mathcal{F}(\pref)}\\
  & \sum_{j\in{\cal G}_i}\pgen_j - P^{\mathrm{d}}_i = \sum_{k\in{\cal N}}
      \frac{\theta_{i} - \theta_{k}}{X_{ik}},\forall i \in{\cal N}, \\
  &\underline{P}^{\mathrm{g}}_{j} \leq \pgen_{j} \leq \overline{P}^{\mathrm{g}}_{j}, \;\forall j \in \cal{G}, \\
  &\underline{F}_{ik} \le \frac{1}{X_{ik}}(\theta_{i} - \theta_{k}) \le \overline{F}_{ik},\;\forall i,k\in{\cal N},
\end{aligned}
\end{equation}
where $\pgen$ is the actual power generation. $\mathcal{J}_n(\pgen)=\sum_{j\in{\cal G}} a_j (\pgen_j)^2 + b_j \pgen_j + c_j$ is the generation cost, $\mathcal{J}_a$ is the adjusting cost, which penalizes the difference between $\pref$ and $\pgen$, and $\mathcal{F}(\pref)$ is the feasible set of the actual generation given the nominal generation dispatch. The HAN variable is the nominal generation dispatch $\pref$, the WAS variables are the actual generation dispatch $\pgen$ and phase angle $\theta$. The feasible set $\mathcal{F}$ is defined as $\mathcal{F}(\pref)=\{p\in \mathbb{R}^{|\mathcal{G}|}|\forall j \in \mathcal{G}\backslash\mathcal{G}_{\mathrm{s}}, p_j=\pref_j \}$, where $\mathcal{G}_{\mathrm{s}}$ is the set of flexibility resources, i.e. the generation dispatch can be adjusted after the nominal dispatch. Additional constraints can be added such as ramping constraints on the change of power generation. The adjusting cost then penalizes the adjustment of flexible generators $j\in\mathcal{G}_{\mathrm{s}}$. In particular, we consider the following adjusting cost function:
% \begin{equation}\label{eq:adjusting_cost}
%     \begin{aligned}
%     \mathcal{J}_a^1(\pref,\pgen) &=\sum_{j\in\mathcal{G}\backslash\mathcal{G}_s} \alpha_j (\pref_j-\pgen_j)^2\\
%     \mathcal{J}_a^1(\pref,\pgen) &=\sum_{j\in\mathcal{G}\backslash\mathcal{G}_s} \beta^+_j [\pref_j-\pgen_j]_+ + \beta^-_j [\pgen_j-\pref_j]_+,
%     \end{aligned}
% \end{equation}
\begin{equation*}%\label{eq:adjusting_cost}
   \mathcal{J}_a(\pref,\pgen) =\sum_{j\in\mathcal{G}\backslash\mathcal{G}_s} \zeta^+_j [\pref_j-\pgen_j]_+ + \zeta^-_j [\pgen_j-\pref_j]_+,
\end{equation*}
% where $\mathcal{J}_a^1$ is a symmetric quadratic cost that penalizes deviation of $\pgen$ from $\pref$;
which is an asymmetric linear cost that penalizes generation surplus and generation deficiency differently. $[\cdot]_+=\max\{\cdot,0\}$, and $\zeta^+$ is chosen to be smaller than $\zeta^-$ so that generation deficiency is more heavily penalized. Under the piecewise linear $\mathcal{J}_a$, the second stage is a convex optimization problem. $\mathrm{OPF}(P^d,\pref)$ denotes the optimal cost function of ~\eqref{eq:second_stage_OPF} whenever ~\eqref{eq:second_stage_OPF} is strictly feasible, and $\mathrm{OPF}(P^d,\pref)=\infty$ if~\eqref{eq:second_stage_OPF} is infeasible. Clearly, $\mathrm{OPF}(P^d,\pref)$ depends heavily on $P^d$, which is unknown at the first stage.

While the knowledge on $P^d$ is not perfect when deciding $\pref$ in the first stage, there is a causal relationship between the weather data and the electricity demand as shown in \cref{fig:causality}. Therefore, given historical data $\{\text{[weather,time]}, \text{demand}\}_{i=1}^N$, we build a conditional Kernel DRO for the DC-OPF problem to better capture the conditional distribution of $P^d$. The weather and time signal serves as the random variable being conditioned on ($X$), $P^d$ is the random variable whose conditional distribution we are interested in. The first stage decision making is then solved with the following DRO:

\begin{equation}\label{eq:DRO_OPF}
  \min_{\pref}\max_{Q\in\mathscr{A}_{\xi}} \mathbb{E}_Q[\mathrm{OPF}(P^d,\pref)],
\end{equation}
where $\xi$ is the auxiliary variable (weather, time, etc.), $\mathscr{A}_\xi$ is the ambiguity set over the conditional distribution of $P^d$ given $\xi$.

Following the conditional kernel DRO framework, the ambiguity set on conditional distribution is turned into an ambiguity set in the conditional mean operator in RKHS. Given samples $\{\xi_i,P^d_i\}_{i=1}^N$, we use Gaussian RBF kernel for both $\xi$ and $P^d$. In particular, each dimension of $\xi$ is equipped with its own kernel and the distances are added to form the total distance. For cyclic dimensions such as hour in the day, the distance is wrapped around, i.e., the distance between 23:00 and 1:00 is 2 hours, not 22 hours. We denote the two kernel functions as $k_\xi$ and $k_{P^d}$.

Given a particular auxiliary variable evaluation $\xi$, the conditional mean operator is computed as $\hat{\mu}_{P^d|\xi}=\sum_{i=1}^N \beta_i \phi(P^d_i)+\beta_{N+1} \phi(P^d_\xi)$ with $\beta$ computed following~\eqref{eq:beta}, where the last term is associated with the unknown $P^d$ under the query point $\xi$. The ambiguity set is defined following Proposition \ref{prop:amb}, and the conditional kernel DRO is solved in the dual form~\eqref{eq:dual_CDRO_emp}.
\begin{equation}\label{eq:dual_CDRO_OPF}
    \begin{aligned}
  \min_{\pref,\alpha\in\mathbb{R}^M,f_0\in \mathbb{R}} ~&f_0 + \sum_{i=1}^N \beta_i f(P^d_i)+\epsilon_x ||f||_\mathcal{G}\\
  \mathrm{s.t.}~& \text{OPF}(\pref,P^d_j)\le f(P^d_j)+f_0,~j=1,...,M,
  \end{aligned}
\end{equation}
where $f(P^d)=\sum_{j=1}^M \alpha_j l(P_j^d,P^d)$. Note that the objective depends on $f(P_i^d)$, which enumerates over the observation dataset ${\xi_i,P^d_i}_{i=1}^N$, whereas the constraints enumerates over $\{P_j^d\}_{j=1}^M$, which is the certification set. The certification set's role is to approximate the support of $P^d$, and we construct it by random sampling $P^d$ from the observation dataset with added Gaussian noise.

In practice, to reduce the computation complexity, we only consider the closest $m$ points in the observation dataset measured by the kernel distance. This is because points with a large distance to the query point in the RKHS will not influence the result much, thus can be ignored. The choice of $m$ depends on the kernel, i.e., how fast does $k(x,x')$ decrease to near zero as $x'$ moves away from $x$.

\subsection{Numerical experiments}\label{sec:result}

The case study uses the 11-bus ERCOT transmission system from \cite{battula2020ercot} with the historical hourly demand profile from ERCOT market information data hub \cite{ERCOT_EMIL}.
The corresponding historical weather data for the ERCOT zones was obtained from the National Solar Radiation Database \cite{NREL_NSRDB}.
All models in the case study are implemented using the CVXPY package \cite{diamond2016cvxpy} and the code and data are available in \href{https://github.com/chenyx09/CKDRO}{https://github.com/chenyx09/CKDRO}.

We consider three benchmarks.
\begin{itemize}
    \item The first one is the optimal cost, i.e., assuming that $P^d$ is perfectly known in the first stage, which is not realistic, yet it serves as the lower bound of the best cost attainable.
    \item The second benchmark is to solve the first stage as an OPF problem with the average $P^d$ in the dataset $\overline{P^d}$, i.e., replacing the expectation in~\eqref{eq:DRO_OPF} with $\overline{P^d}$. This setup ignores the auxiliary variable completely.
    \item The third benchmark is to perform a kernel interpolation with the same kernels used in DRO and perform OPF with the interpolated $P^d$. To be specific, given the auxiliary variable $x$, first compute $\beta$ following~\eqref{eq:beta}, then the interpolated $P_x^d$ is computed as $P_x^d=\sum_{i=1}^N \frac{\beta_i}{\overline{\beta}} P_i^d$ with $\overline{\beta}=\sum_{i=1}^N \beta_i$, then $\pref$ is obtained by solving the OPF with $P_x^d$.
\end{itemize}

The auxiliary variable has 18 dimensions, consisting of hours in the year (cyclic with period 8760, indicating the time in the year), hours in the day (cyclic with period 24, indicating time in the day), and temperature and precipitation in 8 regions.

\begin{figure}
    \centering
    \includegraphics[width=\columnwidth]{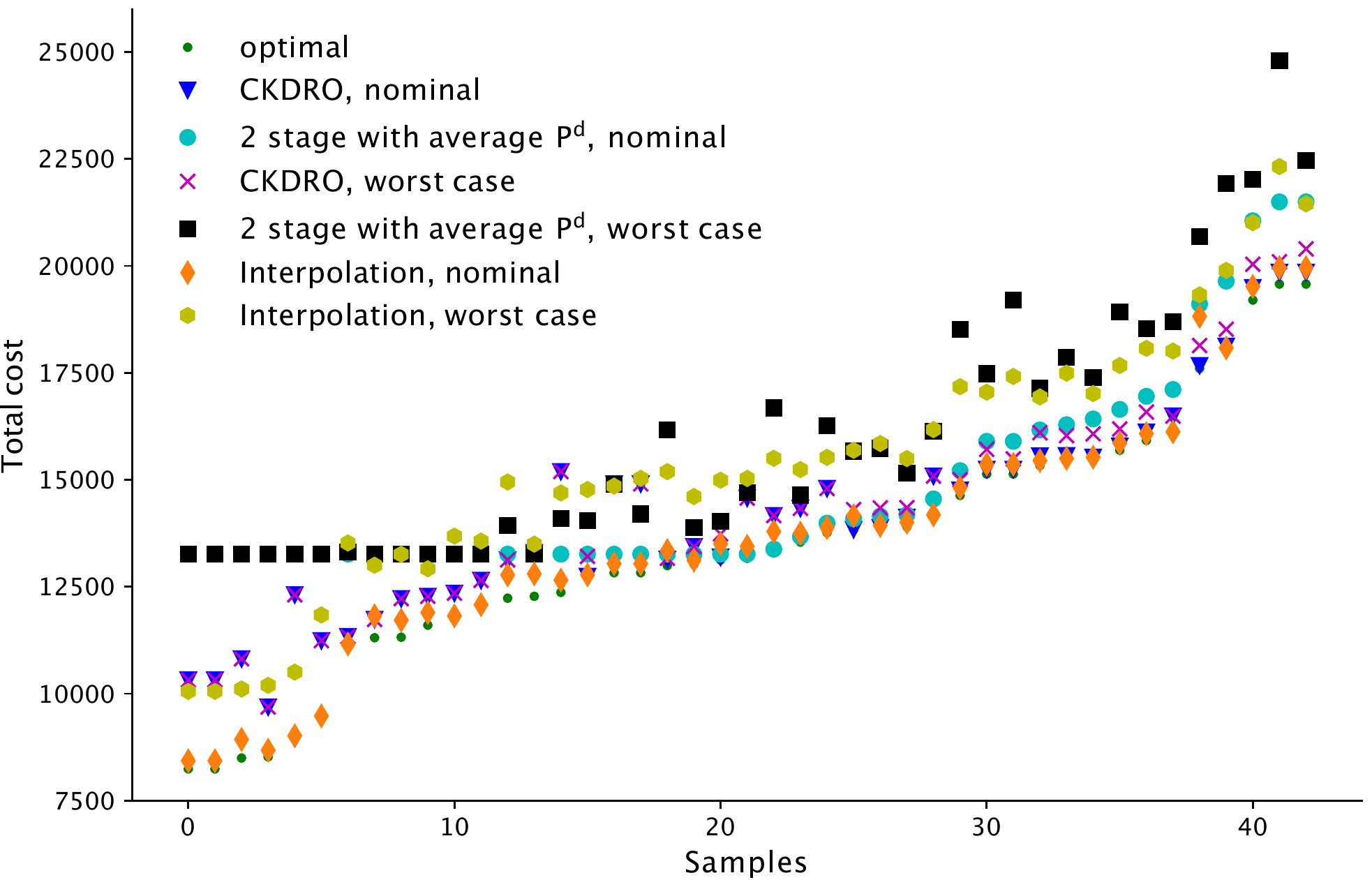}
    \caption{Comparison between CKDRO result and benchmarks for sample data set (50 points sorted by the objective value). The nominal cost of the CKDRO result is close to the optimal value whereas its worst case cost is very close to the nominal cost, showing greater robustness than the benchmarks.}
    \label{fig:comp_to_benchmark}
\end{figure}

We sample 60 points from the history dataset and for each data point $[\xi_i,P^d_i]$, the generation cost with the CKDRO solution is compared with the three benchmarks. To be specific, the CKDRO is solved with $\xi_i$, the two stage cost is then evaluated as  $\text{OPF}(\pref,P^d_i)$, where $\pref$ is the solution of the first stage CKDRO. For the three benchmarks, the first stage is solved as an OPF problem with $P_i^d$, $\overline{P^d}$ and $P^d_{\xi_i}$, respectively, then the two stage cost is evaluated with the solution from the first stage. Since the DRO setup aims at optimizing over the worst performance over the ambiguity set, in addition to the nominal two stage cost, we also plot the worst case cost, where Gaussian noise $w\sim\mathcal{N}(0,\sigma_w)$ is added to $P_i^d$ and the worst cost among $m$ samples of $\text{OPF}(\pref,P^d_i+w)$ is recorded.

\Cref{fig:comp_to_benchmark} shows the cost comparison between the CKDRO result and the benchmarks. The results are sorted with the optimal cost in ascending order. The kernel interpolation cost is almost identical to the optimal cost, showing that the kernel interpolation is able to predict $P_d^i$ fairly accurately. However, since it takes no uncertainty into account, its worst case cost is worse than the CKDRO result. The cost with the mean load profile $\overline{P^d}$ is significantly higher than the CKDRO result, and there is a big gap between its nominal cost and worst case cost, indicating that it is not robust to uncertainty. CKDRO is able to achieve close-to-optimal nominal cost while being much more robust to uncertainty, which is demonstrated by the little gap between its nominal cost and worst case cost.

\begin{figure}
    \centering
    \includegraphics[width=\columnwidth]{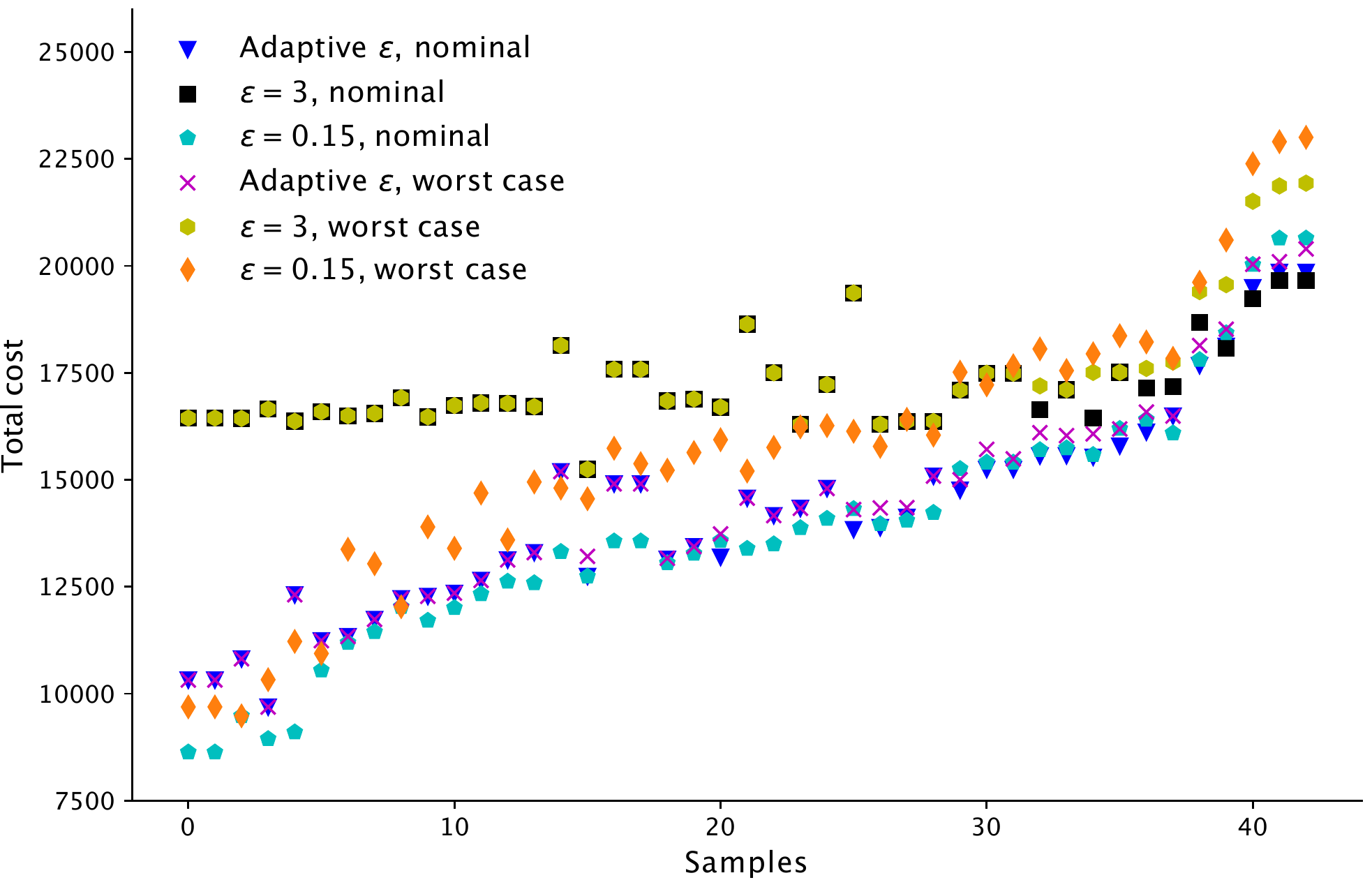}
    \caption{Influence of $\epsilon$ on the CKDRO result. A small $\epsilon$ results in a smaller nominal cost, yet the worst case cost can be huge; a large $\epsilon$ leads to a smaller gap between the nominal and worst case cost, yet can significantly hurt the nominal cost. The adaptive $\epsilon$ achieves the smallest worst case cost.}
    \label{fig:epsilon_case1}
\end{figure}

As shown in~\eqref{eq:dual_CDRO_OPF}, $\epsilon$ determines the level of robustness for the CKDRO, and changes with the number of data points and the location of the auxiliary variable. To showcase the effect of $\epsilon$, \cref{fig:epsilon_case1} shows the comparison of the CKDRO result with the adaptive $\epsilon$ and two benchmarks, one fixing $\epsilon=0.15$, and the other fixing $\epsilon=3$. A smaller $\epsilon$ causes the DRO to consider distributions that are more concentrated around the conditional mean, resulting in better nominal cost, yet may struggle when $P^d$ deviates from the conditional mean, which is shown in the worst case cost. A large $\epsilon$, on the other hand, leads to a smaller gap between the nominal and worst case cost, sometimes even 0 gap, yet significantly hurt the nominal performance. The adaptive $\epsilon$ achieves the best worst case performance, as is the goal of the DRO.

\section{Conclusion}\label{sec:conclusion}
We proposed a conditional kernel DRO framework capable of robust decision making that leverages the conditional distribution of the unknown variable. The main application of CKDRO is when an auxiliary variable is observable and correlated with the unknown variable, and one needs to make a decision after observing the auxiliary variable. The key idea is to use the conditional mean operator in RKHS space to define the ambiguity set, whose size depends on the data volume as well as the query point position. We draw connections between the ambiguity set used in CKDRO and  one defined with the Wasserstein distance and show that they are related via the change of test function. The CKDRO is solved in its dual form and approximated by a finite-dimensional convex program. We show its application on a optimal power flow problem and the results show that CKDRO is able to outperform several common benchmarks by leveraging the auxiliary variable and the uncertainty in conditional distribution.
\balance
\bibliographystyle{myieeetran}
\bibliography{mybib}
\end{document}